\documentclass[12pt]{article}

\usepackage{amsthm, amsmath, amssymb, accents, url}

\newcommand{\utilde}{\undertilde}

\newcommand{\restrict}{\upharpoonright}
\newcommand{\forces}{\Vdash}

\newcommand{\dom}{\mathrm{dom}}

\newcommand{\less}{\mathord{<}}

\newcommand{\DC}{\mathsf{DC}}

\newcommand{\ZF}{\mathsf{ZF}}
\newcommand{\ZFC}{\mathsf{ZFC}}

\newcommand{\bls}{\vspace{\baselineskip}}

\newcommand{\HOD}{\mathrm{HOD}}

\newcommand{\cB}{\mathcal{B}}

\newcommand{\cF}{\mathcal{F}}
\newcommand{\cG}{\mathcal{G}}

\newcommand{\cP}{\mathcal{P}}
\newcommand{\cS}{\mathcal{S}}

\newcommand{\cU}{\mathcal{U}}
\newcommand{\cW}{\mathcal{W}}

\newcommand{\rmc}{\mathrm{c}}

\newcommand{\rmf}{\mathrm{f}}

\newcommand{\bbE}{\mathbb{E}}
\newcommand{\bbN}{\mathbb{N}}
\newcommand{\bbP}{\mathbb{P}}
\newcommand{\bbQ}{\mathbb{Q}}
\newcommand{\bbR}{\mathbb{R}}

\newcommand{\supp}{\mathrm{supp}}

\newcommand{\spn}{\mathrm{span}}

\newcommand{\bP}{\mathbf{P}}

\newcommand{\cl}{\mathrm{cl}}
\newcommand{\cf}{\mathrm{cf}}

\newcommand{\bv}{\mathbf{v}}

\newtheorem{theorem}{Theorem}[section]

\newtheorem{lemma}[theorem]{Lemma}

\newtheorem{definition}[theorem]{Definition}

\newtheorem{claim}[theorem]{Claim}

\newtheorem{remark}[theorem]{Remark}

\newtheorem{question}[theorem]{Question}

\begin{document}
	
	\title{Discontinuous homomorphisms without Hamel bases}
	\author{Paul Larson\thanks{The research of the first author is supported in part by NSF grant DMS-2452139 and BSF grant 2024191. We thank Assaf Shani for comments on an earlier version of this paper.} \and Saharon Shelah\thanks{The research of the second author is supported in part by BSF grant 2024191. Publication 1276 in the second author's publication list.\\MSC2020: 03E25; 03E35; Keywords:Hamel bases, discontinuous homomorphisms, Axiom of Choice, internal direct sums}}


	
	\maketitle
	
	\begin{abstract}
	We produce a model of $\ZF + \DC$ in which there exists a discontinuous homomorphism from $(\bbR, +)$ to itself but no Hamel basis for $\bbR$, and prove a generalization of this result in terms of internal direct sums. 
	\end{abstract}

\section{Introduction} 
We say that a subset of a vector space $S$ is a \emph{Hamel basis} for $S$ if it is a maximal linearly independent subset of $S$ over the scalar field $\bbQ$. A permutation of a Hamel basis for $\bbR$ naturally extends to an automorphism of the group $(\bbR, +)$; if the permutation moves some elements and fixes others the induced homomorphism is discontinuous. In this paper we show (Theorem \ref{mainthrm}) that the existence of a discontinuous homomorphism from $(\bbR, +)$ to itself does not, in $\ZF + \DC$, imply the existence of a Hamel basis for any vector space over $\bbQ$ containing a copy of $(\bbR, +)$. 


Our interest in this problem was inspired by two related results regarding selectors for the Vitali equivalence relation $\bbE_{0}$. In \cite{LZ19} it was shown that the existence of a discontinuous homomorphism from $\bbR$ to itself induces (in $\ZF$) a selector for this equivalence relation (this fact is used in Remark \ref{1dimrem} below). In \cite{GST} it was shown that the existence of a selector for $\bbE_{0}$ does not imply the existence of a Hamel basis (assuming the consistency of a strongly inaccessible cardinal, which can be removed by combining the proof in \cite{GST} with the approach used here). These results naturally lead to the questions answered in this paper. 

In the second part of the paper we consider ways of writing $\bbR$ as an internal direct sum of subspaces. 
The existence of a Hamel basis for $\bbR$ is equivalence to $\bbR$ being an internal direct sum of one-dimensional subspaces (see Remark \ref{1dimrem}). The real line being equal to a nontrivial internal direct sum gives rise to a discontinuous homomorphism. This leaves a range of questions asking when $\bbR$ being an internal direct sum of one type implies that it is an internal direct sum of another type. In Theorem \ref{mainthrm2} we answer one such question by showing that $\bbR$ being a nontrivial internal direct sum does not imply that it is an internal direct sum of countable-dimensional subspaces. 

While the main result of the second part of the paper subsumes the original form of the question answered in the first part of the paper, we include both arguments, in part because the argument from the first part can be modified to produce discontinuous homomorphisms which are not induced by internal direct sums. The nonexistence result proved in the first part is generalized in a different way than the corresponding part of the second result. Many similar generalizations are possible and we leave these to the interested reader. 

The arguments here are similar to earlier ones to due to Horowitz and Shelah \cite{HS21, S85}. They can also be easily adapted to the methods of \cite{GST}. 


\section{Amalgamations}

In this section we prove the key lemma of the paper (Lemma \ref{keylemma}), which will be applied in Sections \ref{1stmodelsec} and \ref{idssec}. We first review some standard facts about amalgamating homomorphisms.

If $(G, +)$ and $(H, +)$ are abelian groups, and $f \colon G \to H$ is a partial function, then we say that $f$ is \emph{additive} (or a \emph{partial homomorphism}) if the equation $f(x + y) = f(x) + f(y)$ holds whenever $x$, $y$ and $x+y$ are in the domain of $f$. 

A group $(G,+)$ is said to be \emph{divisible} if for each $x \in G$ and $n \in \bbN$ there is a $y \in G$ such that $ny = x$ (where $ny$ represents $y$ added to itself $n$ times). The additive group of any vector space over a field containing $\bbQ$ is clearly divisible. We repeatedly and implicitly use the following standard fact. 

\begin{lemma}\label{divisiblelem} 
If $(G, +)$ is a divisible abelian group, $G'$ is a subgroup of $G$ and $h \colon G' \to G$ is a homomorphism, then $h$ extends to a homomorphism from $G$ to itself.  
\end{lemma}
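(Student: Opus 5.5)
The plan is to prove the statement by the usual Zorn's lemma argument for injectivity of divisible groups, working in $\ZFC$. (Later, if the lemma is needed in a choiceless setting, the same one-step extension can be iterated along a countable enumeration of $G$ modulo $G'$, using $\DC$ to choose the required divisors.)

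First I set up the poset. Let $\cP$ be the set of pairs $(K, g)$ where $K$ is a subgroup of $G$ containing $G'$ and $g \colon K \to G$ is a homomorphism extending $h$. Order $\cP$ by extension. Then $\cP$ is nonempty, since it contains $(G', h)$. The union of a chain in $\cP$ is again a subgroup carrying a well-defined homomorphism, so it is an upper bound for the chain. By Zorn's lemma there is a maximal element $(K, g)$, and it remains to show that $K = G$.

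The key step is the one-step extension, which shows that maximality forces $K = G$. Suppose $x \in G \setminus K$. The set $\{m \in \mathbb{Z} : mx \in K\}$ is a subgroup of $\mathbb{Z}$, so it equals $n\mathbb{Z}$ for some $n \geq 0$.
\begin{itemize}
\item If $n = 0$, put $y = 0$.
\item If $n > 0$, use divisibility to choose $y \in G$ with $ny = g(nx)$.
\end{itemize}
Now define $g'(k + mx) = g(k) + my$ on $K + \mathbb{Z}x$.

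For well-definedness, suppose $k + mx = k' + m'x$. Then $(m - m')x = k' - k \in K$, so $m - m' = jn$ for some integer $j$. Hence
\[ g(k') - g(k) = g(jnx) = j\,g(nx) = jny = (m - m')y, \]
which is exactly what is needed. In the case $n = 0$ this forces $m = m'$ and $k = k'$. Additivity of $g'$ is then immediate from the formula. So $(K + \mathbb{Z}x, g')$ lies in $\cP$ and strictly extends $(K, g)$, contradicting maximality. Therefore $K = G$, and $g$ is the desired extension of $h$.

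The only delicate point is the well-definedness computation, which is where divisibility is used. The one foundational caveat is the appeal to Zorn's lemma. In the $\ZF + \DC$ applications later in the paper, I would check that each use of the lemma is either made in a model of $\ZFC$ (for example, the ground model) or involves $G$ countably generated over $G'$. In the latter case the transfinite argument is replaced by an $\omega$-length recursion of the one-step extension above, which needs only $\DC$.
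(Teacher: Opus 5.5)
Your proof is correct: it is the standard Zorn's lemma argument that divisible groups are injective, with the one-step extension through the ideal $\{m : mx \in K\} = n\mathbb{Z}$ and the well-definedness check done properly, and this is exactly the standard fact the paper invokes without proof. Your caveat about choice also matches the paper's usage, since the lemma is only applied in $V$ and in its forcing extensions, which are models of $\ZFC$.
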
 

Suppose that $\pi_{1},\ldots,\pi_{n}$ are partial homomorphisms from one abelian group $(G,+)$ to another abelian group $(H, +)$, such that the domain of each $\pi_{i}$ is a subgroup of $G$. We say that $\pi_{1},\ldots,\pi_{n}$ can be \emph{amalgamated} if there is a partial homomorphism $\pi \colon (G, +) \to (H, +)$ containing each $\pi_{i}$ whose domain is also a subgroup. It is a standard fact (and easy to see) that $\pi_{1},\ldots,\pi_{n}$ can be amalgamated if and only if, for all $a_{i}, b_{i} \in \dom(\pi_{i})$ ($1 \leq i \leq n$) if $a_{1} + \cdots + a_{n} = b_{1}+ \cdots + b_{n}$ then $\pi_{1}(a_{1}) + \cdots + \pi_{n}(a_{n}) = \pi_{1}(b_{1}) + \cdots + \pi_{n}(b_{n})$. Letting $c_{i} = a_{i} - b_{i}$, we get the following equivalent condition: whenever $c_{i} \in \dom(\pi_{i})$ ($1 \leq i \leq n$) are such that $c_{1} + \cdots + c_{n} = 0$, $\pi_{1}(c_{1}) + \cdots + \pi_{n}(c_{n}) = 0$. We state two specific versions of this fact, both of which will be used in the proof of Lemma \ref{keylemma}. 

\begin{lemma}\label{version0} Suppose that $h_{1}$ and $h_{2}$ are partial homomorphisms from an abelian group $(G, +)$ to another abelian group $(H, +)$, and that the domain of each $h_{i}$ is a subgroup of $G$. Then $h_{1}$ and $h_{2}$ can be amalgamated if and only if they agree on the intersection of their domains.  
\end{lemma}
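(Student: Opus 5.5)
We will prove the two directions separately, working directly from the definition of amalgamation rather than the general criterion. Throughout, write $A_{1} = \dom(h_{1})$ and $A_{2} = \dom(h_{2})$; both are subgroups of $G$.

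The forward direction is immediate. Suppose $\pi$ is a partial homomorphism containing $h_{1}$ and $h_{2}$. For $x \in A_{1} \cap A_{2}$ we get $h_{1}(x) = \pi(x) = h_{2}(x)$, since $\pi$ is a function. So $h_{1}$ and $h_{2}$ agree on the intersection of their domains.

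For the converse, assume $h_{1}$ and $h_{2}$ agree on $A_{1} \cap A_{2}$. Let $A_{1} + A_{2}$ be the subgroup $\{a_{1} + a_{2} : a_{1} \in A_{1}, a_{2} \in A_{2}\}$. Define $\pi$ on $A_{1} + A_{2}$ by
\[
\pi(a_{1} + a_{2}) = h_{1}(a_{1}) + h_{2}(a_{2}).
\]
The main step is checking that $\pi$ is well defined. Suppose $a_{1} + a_{2} = b_{1} + b_{2}$ with $a_{i}, b_{i} \in A_{i}$. Then
\[
c = a_{1} - b_{1} = b_{2} - a_{2},
\]
and $c$ lies in $A_{1} \cap A_{2}$, because each domain is a subgroup. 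By hypothesis $h_{1}(c) = h_{2}(c)$. Using additivity of $h_{1}$ on $A_{1}$ and of $h_{2}$ on $A_{2}$, this says $h_{1}(a_{1}) - h_{1}(b_{1}) = h_{2}(b_{2}) - h_{2}(a_{2})$. Rearranging gives $h_{1}(a_{1}) + h_{2}(a_{2}) = h_{1}(b_{1}) + h_{2}(b_{2})$, as needed. This is exactly the condition $c_{1} + c_{2} = 0 \Rightarrow h_{1}(c_{1}) + h_{2}(c_{2}) = 0$ from the general criterion, applied with $c_{1} = c$ and $c_{2} = -c$.

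Once $\pi$ is well defined, the remaining checks are routine.
\begin{itemize}
\item $\pi$ is additive: $\pi\bigl((a_{1}+a_{2}) + (a_{1}'+a_{2}')\bigr) = h_{1}(a_{1}+a_{1}') + h_{2}(a_{2}+a_{2}')$, which splits as the sum of the two values because $h_{1}$ and $h_{2}$ are homomorphisms on their domains.
\item $\pi$ extends $h_{1}$: take $a_{2} = 0$ and use $h_{2}(0) = 0$.
\item $\pi$ extends $h_{2}$: take $a_{1} = 0$ and use $h_{1}(0) = 0$.
\end{itemize}
The domain $A_{1} + A_{2}$ is a subgroup, so $\pi$ witnesses that $h_{1}$ and $h_{2}$ can be amalgamated. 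The only non-routine step is the well-definedness check above.
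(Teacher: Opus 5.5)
Your proof is correct and follows essentially the paper's approach. The paper treats this lemma as the $n=2$ case of the standard amalgamation criterion: $c_{1}+c_{2}=0$ forces $c_{2}=-c_{1}\in A_{1}\cap A_{2}$, so the condition becomes agreement on the intersection. Your direct construction of $\pi$ on $A_{1}+A_{2}$, with its well-definedness check, is exactly the routine verification behind that criterion in this case.
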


\begin{lemma}\label{version} Suppose that $h_{1}$, $h_{2}$ and $h_{3}$ are partial homomorphisms from an abelian group $(G, +)$ to another abelian group $(H, +)$, and that the domain of each $h_{i}$ is a subgroup of $G$. Then $h_{1}$, $h_{2}$ and $h_{3}$ can be amalgamated if and only if the equation $h_{1}(a_{1}) + h_{2}(a_{2}) = h_{3}(a_{3})$ holds whenever $a_{i}$ ($1 \leq i \leq 3$) are such that $a_{1} + a_{2} = a_{3}$ and each $a_{i}$ is in the domain of the corresponding $h_{i}$. 
\end{lemma}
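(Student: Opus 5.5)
Lemma \ref{version} is the case $n=3$ of the general amalgamation criterion stated before Lemma \ref{version0}, rewritten with the third map on the other side of the equation. I will prove both directions directly, without relying on the informal ``standard fact''.

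\textbf{Necessity.} Suppose $\pi$ is a partial homomorphism with subgroup domain containing $h_1,h_2,h_3$. Let $a_i\in\dom(h_i)$ with $a_1+a_2=a_3$. Then $a_1,a_2,a_3\in\dom(\pi)$, and additivity of $\pi$ gives
\[
h_1(a_1)+h_2(a_2)=\pi(a_1)+\pi(a_2)=\pi(a_3)=h_3(a_3).
\]

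\textbf{Sufficiency.} Let $D=\dom(h_1)+\dom(h_2)+\dom(h_3)$, a subgroup of $G$. For $x=c_1+c_2+c_3$ with $c_i\in\dom(h_i)$, define
\[
\pi(x)=h_1(c_1)+h_2(c_2)+h_3(c_3).
\]
This is additive and extends each $h_i$, since we may take the other two summands to be $0$ and $h_j(0)=0$. The only work is well-definedness. It suffices to show that $c_1+c_2+c_3=0$ implies $h_1(c_1)+h_2(c_2)+h_3(c_3)=0$; this is the reduction already noted in the text, obtained by subtracting two representations and using that each $h_i$ is a homomorphism on its subgroup domain.

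Now suppose $c_1+c_2+c_3=0$. Set $a_1=c_1$, $a_2=c_2$ and $a_3=-c_3$. Since $\dom(h_3)$ is a subgroup, $a_3\in\dom(h_3)$, and $a_1+a_2=a_3$. The hypothesis gives $h_1(c_1)+h_2(c_2)=h_3(-c_3)=-h_3(c_3)$, which is exactly the required vanishing.

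\textbf{Main obstacle.} The only delicate point is well-definedness of $\pi$. It reduces immediately to the hypothesis via the sign flip on $c_3$, which uses that $\dom(h_3)$ is closed under negation and that $h_3$ is a homomorphism there. Lemma \ref{version0} follows the same way: with two maps, $c_1+c_2=0$ means $c_1=-c_2$ lies in $\dom(h_1)\cap\dom(h_2)$, so the condition is exactly agreement on the intersection.
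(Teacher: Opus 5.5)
Your proof is correct and follows the paper's route. The paper obtains Lemma \ref{version} as the case $n=3$ of the standard amalgamation criterion in its vanishing form ($c_1+c_2+c_3=0 \Rightarrow h_1(c_1)+h_2(c_2)+h_3(c_3)=0$), and you supply the routine verification of that criterion (defining $\pi$ on $\dom(h_1)+\dom(h_2)+\dom(h_3)$) together with the sign change $a_3=-c_3$ that turns the vanishing form into the stated one.
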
 

It is a classical fact that any Borel (or even Baire-measurable) homomorphism between Polish groups is continuous (see \cite{H, Pettis}). We will use the following variation of this fact. 


\begin{theorem}\label{standard}
If $\epsilon > 0$ and $f \colon (-\epsilon, \epsilon) \to \bbR$ is Borel and additive, then there is a real number $a$ such that $f(x) = ax$ for all $x \in (-\epsilon,\epsilon)$. 
\end{theorem}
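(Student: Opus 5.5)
The plan is to extend $f$ to a Borel homomorphism $F \colon \bbR \to \bbR$ defined on all of $\bbR$. Once we have $F$, the classical fact quoted just before the statement applies: a Borel homomorphism between Polish groups is continuous. A continuous additive function on $\bbR$ is $\bbQ$-linear, hence of the form $F(x) = ax$, which gives the conclusion on $(-\epsilon,\epsilon)$.

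\textbf{Step 1: the extension.} Fix a positive integer $N$. For $x \in (-N\epsilon, N\epsilon)$ put $F_N(x) = N f(x/N)$. To see that these are compatible, take $M \leq N$ and $x \in (-M\epsilon, M\epsilon)$; we need $N f(x/N) = M f(x/M)$. Additivity of $f$ on its domain gives, by induction on $k$, that $f(ky) = k f(y)$ whenever $y, 2y, \ldots, ky$ all lie in $(-\epsilon,\epsilon)$. Since the interval is symmetric and convex, $|ky| < \epsilon$ implies $|jy| < \epsilon$ for every $j \leq k$, so the hypothesis is automatic. Apply this with $y = x/(MN)$ and $k = M$, and again with $k = N$. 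This yields $f(x/N) = M f(y)$ and $f(x/M) = N f(y)$, so both sides equal $MN f(y)$.

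Consequently the maps $F_N$ cohere, and $F = \bigcup_N F_N$ is defined on all of $\bbR$. It is Borel, because on each open interval $(-N\epsilon, N\epsilon)$ it is a Borel function composed with scaling, and countably many such pieces cover $\bbR$.

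\textbf{Step 2: additivity of $F$.} Given $x, y \in \bbR$, choose $N$ so large that $x/N$, $y/N$ and $(x+y)/N$ all lie in $(-\epsilon,\epsilon)$. Then additivity of $f$ gives
\[
F(x+y) = N f\bigl((x+y)/N\bigr) = N f(x/N) + N f(y/N) = F(x) + F(y).
\]

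\textbf{Step 3: conclusion.} By Pettis's theorem, $F$ is continuous. It is also $\bbQ$-linear, since it is additive and $\bbR$ is divisible, so $F(q) = qF(1)$ for every $q \in \bbQ$. By continuity, $F(x) = ax$ for all $x$, where $a = F(1)$. Restricting to $(-\epsilon,\epsilon)$ gives $f(x) = F(x) = ax$ there.

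The only delicate point is the well-definedness check in Step 1, specifically making sure every intermediate multiple used in the induction stays inside $(-\epsilon,\epsilon)$. This is handled by the symmetry and convexity of the interval. The rest is routine given the classical theorem.
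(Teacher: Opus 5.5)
Your proposal is correct and takes the same approach as the paper. The paper also extends $f$ by $x \mapsto nf(x/n)$ and then invokes the classical fact that Borel homomorphisms are continuous; the only difference is that it takes $n$ with $|x/n| < \epsilon/2$ to make additivity immediate, where you choose $N$ large enough for $x$, $y$ and $x+y$ at once, and your well-definedness check fills in detail the paper leaves implicit.
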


\begin{proof}
For each real number $x$, define $g(x)$ to be $nf(x/n)$, for any positive integer $n$ such that $|x/n| < \epsilon/2$. The assumptions imply that $g$ is well-defined, additive and Borel, and that it extends $f$.   
\end{proof}


We will let $P$ denote Cohen forcing, in the form $B(\bbR)/I$, where $B(\bbR)$ denotes the set of Borel subsets of $\bbR$ and $I$ the ideal of meager sets.  It follows then that $P$ is a c.c.c. forcing adding a real number and having continuous reading of names (so every real number in the extension is the result of applying a ground-model Borel function to the generic real), and that, for any Borel set $B \subseteq \bbR^{n+1}$, the set of $\bar{x} \in \bbR^{n}$ for which $(\bar{x}, y) \in B$ for an $I$-positive set of $y$ is also Borel (see Theorem 16.1 of \cite{Kechris}). Since $I$ is shift-invariant, it follows that if $x$ is a $V$-generic real added by $P$, and $y$ is  real number in the ground model, then $x + y$ is also $V$-generic for $P$. We fix the terms $P$ and $I$ for the rest of the paper, and refer the reader to \cite{BJ95} for more information about Cohen forcing and its products.





Real numbers $x_{1}$ and $x_{2}$ are \emph{mutually} $P$-\emph{generic} over $M$ if they are the two $P$-generic reals over $M$ produced by forcing over $M$ with the product forcing $P \times P$. 

The following is the key lemma in the proofs of our main theorems. 

\begin{lemma}\label{keylemma} 
Suppose that $x_{1}$ and $x_{2}$ are mutually $P$-generic reals over $V$, and let $x_{3} = x_{1}-x_{2}$. 
\begin{enumerate}
\item If $y$ is an element of $(\bbR^{V[x_{1}]} + \bbR^{V[x_{2}]}) \cap \bbR^{V[x_{3}]}$, then there exist $a, b \in \bbR^{V}$ such that $y = ax_{3} + b$.  
\item Let
\begin{itemize}
\item $h_{0}$ be a homomorphism from $(\bbR, +)$ to itself in $V$, 
\item $c$ be a real number in $V$, and, 
\item for each $i \in \{1,2,3\}$, $h_{i}$ be, in $V[x_{i}]$, a homomorphism from $(\bbR, +)^{V[x_{i}]}$ to itself extending $h_{0}$, with $h_{i}(dx_{i}) = cdx_{i}$ for all $d \in \bbR^{V}$.
\end{itemize}
Then $h_{1}$, $h_{2}$ and $h_{3}$ can be amalgamated in $V[x_{1}, x_{2}]$.
\end{enumerate}
\end{lemma}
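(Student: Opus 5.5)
For part (1), write $y = y_{1} + y_{2}$ with $y_{i} \in \bbR^{V[x_{i}]}$. By continuous reading of names, fix ground-model Borel functions $f_{1}, f_{2}, g$ with $y_{1} = f_{1}(x_{1})$, $y_{2} = f_{2}(x_{2})$ and $y = g(x_{1} - x_{2})$. Then
\[
f_{1}(x_{1}) + f_{2}(x_{2}) = g(x_{1} - x_{2})
\]
holds for the mutually generic pair $(x_{1}, x_{2})$. Some condition $B_{1} \times B_{2}$ (Borel, nonmeager) forces this. By the Kuratowski--Ulam theorem, the Borel set of pairs $(u,v) \in B_{1} \times B_{2}$ where the identity fails is meager. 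Shrinking $B_{1}, B_{2}$ to sets that are comeager in small intervals $U_{1}, U_{2}$, the identity $f_{1}(u) + f_{2}(v) = g(u-v)$ holds for comeagerly many $(u,v) \in U_{1} \times U_{2}$.

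The main step is to extract additivity of $g$ from this. Pick $u, u'$ and $v, v'$ generically, so that all four pairs $(u,v), (u,v'), (u',v), (u',v')$ satisfy the identity; this is possible by Kuratowski--Ulam and genericity. Then
\[
g(u-v) - g(u-v') - g(u'-v) + g(u'-v') = 0.
\]
Setting $s = u - v$, $t = v - v'$ and $r = u' - u$, this says that
\[
g(s) - g(s+t) - g(s-r) + g(s-r+t) = 0
\]
for comeagerly many $(s, t, r)$ in a small box. So the Borel function $\Delta_{t} g(s) = g(s+t) - g(s)$ is invariant under small shifts of $s$ for comeagerly many parameters. Hence, after modifying $g$ on a meager set, $\Delta_{t} g$ is locally a function of $t$ alone, and that function is additive on a small interval.

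By Theorem \ref{standard}, this function of $t$ is linear, $at$ with $a \in V$, so $g(s) = as + b$ on a comeager subset of a neighborhood of the relevant values. The values then agree at the generic $x_{3}$, since $x_{3} = x_{1} - x_{2}$ is $P$-generic over $V$ and lies in that comeager set whenever the condition is chosen below the relevant open set. A density argument finishes part (1): the set of conditions forcing ``$y = ax_{3} + b$ for some $a, b \in V$'' is dense. I expect this Kuratowski--Ulam bookkeeping, which turns the generic identity into a genuine local additivity that Theorem \ref{standard} can use, to be the main obstacle.

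For part (2), I will use Lemma \ref{version}. The requirement is that $h_{1}(a_{1}) + h_{2}(a_{2}) = h_{3}(a_{3})$ whenever $a_{1} + a_{2} = a_{3}$ with $a_{i} \in \bbR^{V[x_{i}]}$. Then $a_{3} \in (\bbR^{V[x_{1}]} + \bbR^{V[x_{2}]}) \cap \bbR^{V[x_{3}]}$, so by part (1), $a_{3} = dx_{3} + b$ with $d, b \in V$. This gives
\[
a_{1} - dx_{1} = b - a_{2} + dx_{2} \in \bbR^{V[x_{1}]} \cap \bbR^{V[x_{2}]} = \bbR^{V},
\]
the last equality by mutual genericity. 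Call this common value $e \in V$. Then $a_{1} = dx_{1} + e$ and $a_{2} = b - e - dx_{2}$. Computing with $h_{i}(dx_{i}) = cdx_{i}$ and the agreement of each $h_{i}$ with $h_{0}$ on $\bbR^{V}$:
\begin{align*}
h_{1}(a_{1}) + h_{2}(a_{2}) &= cdx_{1} + h_{0}(e) + h_{0}(b-e) - cdx_{2},\\
h_{3}(a_{3}) &= cdx_{3} + h_{0}(b).
\end{align*}
These are equal since $x_{3} = x_{1} - x_{2}$ and $h_{0}$ is additive. Lemma \ref{version} then yields the amalgamation.
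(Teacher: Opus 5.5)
Your part (2) is correct. It is the paper's computation, except that where the paper appeals to Lemma \ref{version0} you decompose $a_1,a_2$ explicitly via $\bbR^{V[x_1]}\cap\bbR^{V[x_2]}=\bbR^V$. The displayed line should read $a_1-dx_1=b-a_2-dx_2$; your formulas for $a_1,a_2$ after it are right.

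Part (1) has a gap exactly at the step you flagged. Applying Kuratowski--Ulam to the four-point identity only yields a comeager set $T$ of $t$ for which $\Delta_t g$ is generically constant in $s$. So the function $\psi(t)$ you want to feed to Theorem \ref{standard} is defined only on $T$. It satisfies $\psi(t_1+t_2)=\psi(t_1)+\psi(t_2)$ only when $t_1,t_2,t_1+t_2\in T$, whereas Theorem \ref{standard} needs a Borel function additive on a whole interval. ``Modifying $g$ on a meager set'' does not address this: for each fixed $t$, such a modification changes $\Delta_t g(s)$ only for a meager set of $s$, so it tells you nothing new about $t\notin T$.

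What is missing is an argument that $\Delta_t g$ is generically constant for \emph{every} small $t$. Either of the following supplies it.
\begin{itemize}
\item For arbitrary $t$, pick $t_1\in T\cap(t-T)$ and use the cocycle identity $\Delta_t g(s)=\Delta_{t_1}g(s)+\Delta_{t-t_1}g(s+t_1)$.
\item Translate the comeager set of good pairs in the second coordinate. This gives $g(u-v+t)-g(u-v)=f_2(v-t)-f_2(v)$ for comeagerly many $(u,v)$; then apply Kuratowski--Ulam in the coordinates $(u-v,v)$.
\end{itemize}
Then $\psi(t)$, the generic value of $\Delta_t g$, is defined on an interval. It is Borel, since the category quantifier preserves Borelness, and it is additive by the cocycle identity, so Theorem \ref{standard} applies. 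You still need to spell out the standard fact that a Borel function generically invariant under small translations is generically constant, applied to $g(s)-as$.

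With that repair your route is valid but genuinely different from the paper's. The paper never takes second differences of $t$ (your $g$). Instead it translates \emph{both} generics by the same ground-model $d$, which preserves genericity and the difference $x_1-x_2$. Hence $r(x_1+d)-r(x_1)=s(x_2)-s(x_2+d)$ lies in $V[x_1]\cap V[x_2]=V$ and depends only on $d$. This produces a Borel additive $u$ defined at every small $d$ at once, so the comeager-versus-all issue never arises. The conclusion then comes from showing that the summands $r$ and $s$ are affine with slopes $a$ and $-a$.

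Your four-point identity instead shows that $g$ itself is generically affine. It uses only that $g(u-v)$ generically splits as a function of $u$ plus a function of $v$, at the cost of the extra upgrade step above.
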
 

\begin{proof}
For any $x_{1}$, $x_{2}$, $x_{3}$ and $y$ as given there exist Borel functions $r$, $s$ and $t$ in $V$ such that $y = r(x_{1}) + s(x_{2}) = t(x_{3})$. Then there exists a condition $(B_1, B_2) \in P \times P$ (with $x_{1} \in B_{1}$ and $x_{2} \in B_{2}$) such that, letting $\dot{g}_{1}$ and $\dot{g}_{2}$ be the standard names for the reals produced by each copy of $P$, 
$(B_{1}, B_{2})$ forces that $r(\dot{g}_{1}) + s(\dot{g}_{2}) = t(\dot{g}_{1}-\dot{g}_{2})$. We will show that for any such $B_{1}$, $B_{2}$, $r$, $s$ and $t$, there exist $a, b \in \bbR^{V}$ and a condition $(B'_{1}, B'_{2}) \leq (B_{1}, B_{2})$ forcing that $t(\dot{g}_{1} - \dot{g}_{2})$ is equal to $a(\dot{g}_{1} - \dot{g}_{2}) + b$.

Shrinking $B_{1}$ and $B_{2}$ if necessary, we may assume that there exist real numbers $c_{1}$, $c_{2}$ and $\epsilon > 0$ such that (for each $i \in \{1,2\}$) $B_{i}$ is subset of the interval $(c_i-\epsilon, c_{i}+ \epsilon)$ whose complement in this interval is in $I$. For each $i \in \{1,2\}$ and $j \in \omega$, let $B^{j}_{i} = B_{i} \cap (c_{i}-\epsilon/2^{j}, c_{i} + \epsilon/2^{j})$.

 

 
For each $d \in (-\epsilon/2, \epsilon/2) \cap V$, $(B^{1}_{1}, B^{1}_{2})$ forces that $r(\dot{g}_{1}+ d) + s(\dot{g}_{2} + d)$ is equal to $t((\dot{g}_{1} + d)-(\dot{g}_{2} + d)) = t(\dot{g}_{1} - \dot{g}_{2})$. This implies that $(B^{1}_{1}, B^{1}_{2})$ forces that $r(\dot{g}_{1} + d) + s(\dot{g}_{2} + d)$ is the same for all values of $d \in (-\epsilon/2, \epsilon/2) \cap V$. It follows then that $(B^{1}_{1}, B^{1}_{2})$ forces that $r(\dot{g}_1 + d) - r(\dot{g}_{1}) = s(\dot{g}_{2}) - s(\dot{g}_{2}+d)$ for all such values of $d$. Since the realizations of $\dot{g}_{1}$ and $\dot{g}_{2}$ will be mutually generic, this value must be in $V$, and must depend only on $d$. 
Let $u(d)$ be the corresponding value as forced by $B'_{1}$ (for $r$) and $B'_{2}$ (for $s$). Then $u$ is also a Borel function, since $u(d)$ is the unique value taken by the Borel function $r(x + d) - r(x)$ on an $I$-large subset of $B^{1}_{1}$ (here we use the fact that the ``comeagerly many" quantifier preserves Borelness).



We claim that $u(d + e) = u(d) + u(e)$ for all $d, e \in (-\epsilon/4, \epsilon/4)$. To see this, note that $B^{1}_{1}$ forces that $r(\dot{g}_{1} + (d + e)) = r(\dot{g}_{1}) + u(d + e)$, and $B^{2}_{1}$ forces that \[r(\dot{g}_{1} + (d + e)) = r((\dot{g}_{1} + d) + e) = r(\dot{g}_{1} + d) + u(e) = r(\dot{g}_{1}) + u(d) + u(e).\] 
Since the restriction of $u$ to $(-\epsilon/4, \epsilon/4)$ is an additive Borel function, it follows from Lemma \ref{standard} that it is multiplication by some real number $a$. 

It follows that there exists a real number $b_{1} \in V$ such that $B^{3}_{1}$ forces that $r(\dot{g}_{1}) - a\dot{g}_{1} = b_{1}$. To see this, suppose that two conditions $C, D \leq B^{3}_{1}$ forced the value of $r(\dot{g}_{1}) - a\dot{g}_{1}$ to be, respectively, less and more than some rational number $q$. We may assume that $D = C + d$, for some $d \in (-\epsilon/4,\epsilon/4)$.  
Then $C$ forces that \[q <  r(\dot{g}_{1} + d) - a(\dot{g}_{1} + d) = r(\dot{g}_{1}) + ad - a\dot{g}_{1} - ad < q.\]
An analogous argument shows that $B^{3}_{2}$ forces that $s(\dot{g}_{2}) + a\dot{g}_{2} = b_{2}$, for some $b_{2} \in \bbR$. It follows that $(B^{3}_{1}, B^{3}_{2})$ forces that \[t(\dot{g}_{1} - \dot{g}_{2}) = r(\dot{g}_{1}) + s(\dot{g}_{2}) = a(\dot{g}_{1} - \dot{g}_{2}) + (b_{1}+b_{2}).\] 

For the second part, 
by Lemma \ref{version} above, it suffices to show that \[h_1(y_1) + h_2(y_2) = h_3(y_3)\] holds whenever $y_{i} \in \bbR^{V[x_{i}]}$ $(1 \leq i \leq 3)$ are such that $y_1 + y_2 = y_3$.
Fixing such $y_{1}$, $y_{2}$ and $y_{3}$, we have from the first part of the lemma that $y_{3} = ax_{3} + b$, for some $a,b \in \bbR^{V}$. Then 
\begin{eqnarray*}
h_{3}(y_3) &=& h_{3}(ax_{3} + b)\\
&=& h_{3}(ax_{3}) + h_{3}(b)\\
&=& acx_{3} + h_{0}(b)\\
&=& ac(x_{1}-x_{2}) + h_{0}(b)\\\
&=& acx_{1} + h_{1}(b) - acx_{2}\\
&=& h_{1}(ax_{1} + b) - h_{2}(ax_{2}).
\end{eqnarray*}

We also have that $y_{1} + y_{2} = y_{3} = ax_{3} + b = a(x_{1}-x_{2}) + b = (ax_{1} + b) - ax_{2}$. Since $h_{1}$ and $h_{2}$ are extensions of $h_{0}$ existing in mutually generic extensions of $V$ (so the intersection of their domains is $\bbR^{V}$) they can be amalgamated, which implies that \[h_{1}(y_1) + h_2(y_2) = h_{1}(ax + b) - h_{2}(ax).\] 
\end{proof}

\section{The first model}\label{1stmodelsec}



Let $\kappa$ be a cardinal of uncountable cofinality such that $\kappa^{< \kappa} = \kappa$. For each $d \subseteq \kappa$ let $P_{d}$ be the finite support product of our fixed partial order $P$, indexed by the elements of $d$. So a condition $p$ in $P_{d}$ has the form \[\{ B^{p}_{\alpha} : \alpha \in \supp(p)\},\] where $\supp(p)$ is a finite subset of $d$ and each $B^{p}_{\alpha}$ is a Borel $I$-positive subset of $\bbR$. Then $p_{2} \leq p_{1}$ in $P_{d}$ if and only if $\supp(p_{1}) \subseteq \supp(p_{2})$ and $B^{p_{2}}_{\alpha} \setminus B^{p_{1}}_{\alpha} \in I$ for each $\alpha \in \supp(p_{1})$.
We write $G_{d}$ for a generic filter for $P_{d}$, and $\dot{g}_{\alpha}$ for the natural $P_{\{\alpha\}}$-name for the generic real added by $P_{\{\alpha\}}$.

Let $Q$ be the following partial order. Conditions are pairs $(d, \dot{h})$ such that $d$ is a countable subset of $\kappa$ and $\dot{h}$ is a $P_{d}$-name for a homomorphism from $(\bbR,+)$ to itself. We allow $d = \emptyset$, in which case $\dot{h}$ is a check-name for a homomorphism in the ground model. The order is: 
$(d_1, \dot{h}_1) \leq (d_0, \dot{h}_0)$ if $d_0 \subseteq d_1$ and $1_{P_{d_1}}$ forces that $\dot{h}_{0,G_{d_0}} \subseteq \dot{h}_{1,G_{d_1}}$. Given $q \in Q$ we write $d_{q}$ and $\dot{h}_{q}$ for the first and second coordinates of $q$, respectively.

We adopt the convention that undertilded names (like $\utilde{B}$) are $Q$-names and dotted names (like $\dot{h}$) are $P_{d}$-names (for some $d \subseteq \kappa$), often existing in the ground model $V$. Combining the two notations, we get things like $\utilde{\dot{H}}$, the natural $Q$-name for the $P_{\kappa}$-name $\bigcup\{ \dot{h} : (d,\dot{h}) \in K\}$, where $K$ denotes the $Q$-generic filter.



The two following lemmas imply that $\utilde{\dot{H}}_{K}$ is, the in the $Q$-extension $V[K]$, a $P_{\kappa}$-name for a homomorphism from $(\bbR^{V[K, G_{\kappa}]}, +)$ to itself. Both lemmas follow from the remarks in the previous section, specifically Lemma \ref{divisiblelem} for $\bbR$, which says that any partial homomorphism from a subgroup of $(\bbR, +)$ to $(\bbR, +)$ can be extended to a total homomorphism (in the ground model and in any forcing extension). The genericity of $K$ will guarantee that the realization of $\utilde{\dot{H}}_{K}$ is discontinuous. 

\begin{lemma}\label{ohone} For each $\alpha \in \kappa$, the set of $q \in Q$ with $\alpha \in d_{q}$ is dense. 
\end{lemma}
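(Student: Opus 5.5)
Given $q = (d_q, \dot{h}_q) \in Q$ and $\alpha \in \kappa$, we must find $q' \leq q$ with $\alpha \in d_{q'}$. If $\alpha \in d_q$ there is nothing to do, so assume $\alpha \notin d_q$. Put $d' = d_q \cup \{\alpha\}$. This is still a countable subset of $\kappa$, and $P_{d'}$ is naturally isomorphic to $P_{d_q} \times P_{\{\alpha\}}$. In particular, every $P_{d_q}$-name is also a $P_{d'}$-name, and $G_{d_q}$ is the restriction of $G_{d'}$.

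The construction takes place in a generic extension $V[G_{d'}]$. There $\dot{h}_{q, G_{d_q}}$ is a homomorphism from $(\bbR, +)^{V[G_{d_q}]}$ to itself. Now $\bbR^{V[G_{d_q}]}$ is a subgroup of the divisible group $(\bbR,+)^{V[G_{d'}]}$, so Lemma \ref{divisiblelem}, applied inside $V[G_{d'}]$, gives a homomorphism from $(\bbR,+)^{V[G_{d'}]}$ to itself extending $\dot{h}_{q, G_{d_q}}$.

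We then need a name for such an extension. By the maximum principle (fullness of names, which holds in $V$ since $V$ satisfies choice), there is a $P_{d'}$-name $\dot{h}'$ such that $1_{P_{d'}}$ forces:
\begin{itemize}
\item $\dot{h}'$ is a homomorphism from $(\bbR,+)$ to itself, and
\item $\dot{h}'$ extends $\dot{h}_{q}$ evaluated at $G_{d_q}$.
\end{itemize}
Then $q' = (d', \dot{h}')$ is a condition in $Q$. Since $d_q \subseteq d'$ and the inclusion $\dot{h}_{q,G_{d_q}} \subseteq \dot{h}'_{G_{d'}}$ is forced by $1_{P_{d'}}$, we have $q' \leq q$ and $\alpha \in d_{q'}$, as required.

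The only step needing any care is producing the name $\dot{h}'$ so that everything is forced by $1$ rather than merely by some condition, since the ordering on $Q$ requires $1_{P_{d'}}$ to force the extension. The maximum principle handles this. It applies because Lemma \ref{divisiblelem} holds in every forcing extension of $V$ (each extension satisfies choice), so the existence of a suitable extension is forced by $1_{P_{d'}}$.
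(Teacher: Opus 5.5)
Your proof is correct and follows the paper's approach: the paper says only that this lemma follows from Lemma \ref{divisiblelem} applied in the forcing extension, which is exactly your step of extending $\dot{h}_{q,G_{d_q}}$ to a homomorphism of $\bbR^{V[G_{d_q \cup \{\alpha\}}]}$. Your use of the maximum principle to get a name $\dot{h}'$ for which $1_{P_{d'}}$ forces the extension spells out a detail the paper leaves implicit.
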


\begin{lemma}\label{ohtwo} Every descending $\omega$-sequence in $Q$ has a lower bound. 
\end{lemma}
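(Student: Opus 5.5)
Given a descending sequence $q_0 \geq q_1 \geq q_2 \geq \cdots$ in $Q$, with $q_n = (d_n, \dot{h}_n)$, I will build the lower bound directly. Let $d = \bigcup_{n} d_n$; as a countable union of countable subsets of $\kappa$ it is countable, so it can serve as a first coordinate. The second coordinate will be a $P_d$-name $\dot{h}$ for a homomorphism extending every $\dot{h}_{n}$.

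Fix a $V$-generic $G_d \subseteq P_d$. For each $n$, $G_{d_n} = G_d \cap P_{d_n}$ is $V$-generic for $P_{d_n}$, since $P_d$ is the product $P_{d_n} \times P_{d \setminus d_n}$. So $h_n = \dot{h}_{n,G_{d_n}}$ is a homomorphism from $(\bbR, +)^{V[G_{d_n}]}$ to itself in $V[G_{d_n}] \subseteq V[G_d]$. For $m \leq n$ we have $q_n \leq q_m$, and $1_{P_{d_n}}$ forces $\dot{h}_{m} \subseteq \dot{h}_{n}$, so $h_m \subseteq h_n$. Thus $\{h_n : n \in \omega\}$ is a $\subseteq$-chain, and $h^{*} = \bigcup_n h_n$ is a function in $V[G_d]$. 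The sequence of names $\langle \dot{h}_n : n \in \omega\rangle$ lies in $V$, so $h^*$ is definable in $V[G_d]$ without further choice. Its domain $\bigcup_n \bbR^{V[G_{d_n}]}$ is an increasing union of subgroups, hence a subgroup, and $h^{*}$ is additive on it because any two elements lie in a common $\dom(h_n)$.

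By Lemma \ref{divisiblelem}, applied in $V[G_d]$ (which satisfies $\ZFC$, as $V$ does), $h^{*}$ extends to a homomorphism from $(\bbR, +)^{V[G_d]}$ to itself. By the maximum principle (fullness), there is a $P_d$-name $\dot{h}$ such that $1_{P_d}$ forces that $\dot{h}$ is a homomorphism from $\bbR$ to $\bbR$ extending $\bigcup_n \dot{h}_{n}$. Then $(d, \dot{h}) \in Q$. For each $n$ we have $d_n \subseteq d$, and $1_{P_d}$ forces $\dot{h}_{n,G_{d_n}} \subseteq \dot{h}_{G_d}$, so $(d,\dot{h}) \leq q_n$.

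The only delicate step is reading the $P_{d_n}$-names as $P_d$-names and checking that the forcing relation transfers, namely that $1_{P_{d_n}} \forces \varphi$ implies $1_{P_d} \forces \varphi$ for statements about $P_{d_n}$-names. This follows from the product decomposition: every $P_d$-generic filter restricts to a $P_{d_n}$-generic filter. The case where some $d_n = \emptyset$ is covered automatically, since check-names are $P_{\emptyset}$-names.
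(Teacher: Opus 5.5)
Your proof is correct and follows the paper's approach. The paper justifies this lemma in one line by appeal to Lemma \ref{divisiblelem}. You take $d=\bigcup_n d_n$, observe that the realizations of the $\dot{h}_n$ form a chain whose union is a partial homomorphism on a subgroup of $\bbR^{V[G_d]}$, and extend it by divisibility. You have simply filled in the routine details the paper leaves implicit: restricting generics to $P_{d_n}$ and using the maximum principle to obtain the name.
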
 

We note several consequences of Lemmas \ref{ohone} and \ref{ohtwo}.
\begin{itemize}
\item The partial order $P_{\kappa}$ is the same in $V$ and in any $Q$-extension $V[K]$, from which it follows that the forcing iteration $Q * P_{\kappa}$ and the product forcing $Q \times P_{\kappa}$ are forcing-equivalent. 

\item Densely many conditions $(q, p) \in Q \times P_{\kappa}$ have the property that $\supp(p) \subseteq d_{q}$. We will call such conditions \emph{normal}.

\item Letting $(K, G_{\kappa})$ denote a $V$-generic filter for $Q*P_{\kappa}$, every every element of $\bbR^{V[K, G_{\kappa}]}$ is an element of $\bbR^{V[G_{d}]}$, for some countable $d \subseteq \kappa$. In particular, $\bbR^{V[K, G_{\kappa}]} = \bbR^{V[G_{\kappa}]}$. 
\end{itemize} 

Theorem \ref{mainthrm} below shows that forcing with $Q * P_{\kappa}$ produces a model with a discontinuous homomorphism from $\bbR$ to $\bbR$ and no Hamel basis for $\bbR$. We strengthen the nonexistence part of the theorem by replacing $\bbR$ with an arbitrary Polish vector space containing a copy of $\bbR$. Most of the rest of this section is a proof of the theorem. 



\begin{theorem}\label{mainthrm} 
Suppose that $(K, G_{\kappa})$ is a $V$-generic filter for $Q * P_{\kappa}$, and let $W$ be the model \[\HOD_{V, \bbR^{V[G_{\kappa}]}, \utilde{\dot{H}}_{K, G_{\kappa}}}\] as defined in $V[K, G_{\kappa}]$. Then $\utilde{\dot{H}}_{K, G_{\kappa}}$ is a discontinuous homomorphism from $(\bbR, +)^{V[K, G_{\kappa}]}$ to itself, and $\DC$ holds in $W$. Furthermore, if, in $V$, $(S, +_{S})$ is a Polish vector space over $\bbQ$ in for which there exists a continuous injective homomorphism in $V$ from $(\bbR, +)$ to $(S, +_{S})$, then   
$W$ does not contain a Hamel basis for $(S, +_{S})$. 
\end{theorem}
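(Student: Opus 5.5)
The argument has three parts: $H = \utilde{\dot{H}}_{K,G_{\kappa}}$ is a discontinuous homomorphism, $W \models \DC$, and $W$ has no Hamel basis for $S$. The last part uses a symmetry argument built on Lemma \ref{keylemma}.

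\emph{$H$ is a total, discontinuous homomorphism.} By the consequences of Lemmas \ref{ohone} and \ref{ohtwo}, every real of $V[K,G_\kappa]$ lies in some $V[G_d]$ with $d$ countable. By density, some $q\in K$ has $d\subseteq d_q$, so $H$ is total. Any two conditions in $K$ have a common extension, so $H$ is a function and is additive.

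For discontinuity, fix $\alpha\in\kappa$ and a condition $q$ with $\alpha\notin d_q$. Extend $\dot h_q$ to $d_q\cup\{\alpha\}$ using Lemma \ref{divisiblelem}, choosing the extension so that $\dot g_\alpha\mapsto r\dot g_\alpha$ for a rational $r$ different from the value $h(1)$ that is already determined on $\bbR^{V}$. Any continuous additive map is $x\mapsto h(1)x$, so $H$ cannot be continuous.

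\emph{$\DC$ in $W$.} $Q$ is $\sigma$-closed by Lemma \ref{ohtwo}, and $P_\kappa$ is c.c.c. By the standard argument for models of the form $\HOD_{V,\bbR,A}$ in which countable sequences of reals are added generically, $\omega$-sequences of elements of $W$ are coded by reals together with $V$-parameters, so they lie in $W$. This gives $\DC$ in $W$.

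\emph{No Hamel basis for $S$.} This is the main step. Let $\iota$ be the continuous injective homomorphism from $\bbR$ to $S$. Suppose $A\in W$ is a Hamel basis for $S$. Then $A$ is definable in $V[K,G_\kappa]$ from $H$, a ground-model parameter and a real $z$, and we may take $z\in V[G_{d_0}]$ with $d_0$ countable. Fix a normal condition $(q,p)$ forcing this. Pick $\alpha,\beta\notin d_0\cup d_q$ and work over $V'=V[G_{d_0\cup d_q}]$, where $g_\alpha,g_\beta$ are mutually Cohen. Set $x_1=g_\alpha$, $x_2=g_\beta$ and $x_3=x_1-x_2$.

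Each of $\iota(x_1),\iota(x_2),\iota(x_3)$ is a finite rational combination of elements of $A$, and each $\iota(x_i)$ is generic over $V'$. We use automorphisms of $P_\kappa$ that permute coordinates fixing $d_0\cup d_q$. Lemma \ref{keylemma}(2) lets us replace $H$ by an amalgamation of $h_1,h_2,h_3$, which yields a condition in $Q$ that treats $x_3$ like a fresh generic coordinate. The resulting contradiction comes in two cases.

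If the $A$-support of $\iota(x_3)$ uses basis elements that are not in $V'[x_1]+V'[x_2]$, homogeneity (swapping $x_3$'s role with a third fresh generic) forces those elements to be definable in two incompatible ways.

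Otherwise the support lies in the span of elements from $V'[x_1]$ and $V'[x_2]$. Then $\iota(x_3)$ is written using pieces of $\bbR^{V'[x_1]}+\bbR^{V'[x_2]}$ that also lie in $V'[x_3]$, via the basis definition relativized to $V'[x_3]$. Lemma \ref{keylemma}(1), transferred through $\iota$, says such pieces are of the form $ax_3+b$ with $a,b\in V'$. This makes the coefficient of $x_3$ ground-model definable. Comparing with the mutually generic $x_1$ then makes the coordinate function of $A$ Borel on a nonmeager set, and hence linear by Theorem \ref{standard}, which is impossible for a coordinate functional of a Hamel basis.

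The obstacle I expect is making this symmetry precise: showing that $A\cap V'[x_i]$ is determined inside $V'[x_i]$ from $h_i$ alone. That requires exactly the amalgamation in Lemma \ref{keylemma}(2), so that the three-generic configuration is realized by a single $Q$-condition.
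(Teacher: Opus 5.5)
The totality, discontinuity and $\DC$ parts are essentially fine. The third part has a genuine gap: the two facts that carry the argument are never proved, and the case analysis you use instead does not work.

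What is needed is the following.
\begin{enumerate}
\item[(i)] \emph{Locality.} For a normal $(q,p)$ below the condition defining $A$, the set $A\cap S^{V[G_{d_q}]}$ is decided by $q$ and $G_{d_q}$. Moreover, every element of $S^{V[G_{d_q}]}$ has its $A$-support inside $V[G_{d_q}]$, so this set is a Hamel basis of $S^{V[G_{d_q}]}$ (Lemma \ref{locallem1}).
\item[(ii)] \emph{Invariance.} This set depends only on the model $V[G_{d_q}]$ and on the realized homomorphism $\dot h_{q,G_{d_q}}$.
\end{enumerate}
The paper gets (i) by taking two extensions of $(q,p)$ that decide differently and moving one by a coordinate injection fixing $d_q$ (Remark \ref{autorem}), so that their supports meet exactly in $d_q$. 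It then amalgamates them via Lemma \ref{ohthree}, which uses only Lemma \ref{version0}. It gets (ii) by a back-and-forth construction, in an extension where $\cP(Q)^V$ is countable, of $Q$-generics $K_1\ni q_1$ and $K_2\ni q_2$ over $V[G_1]=V[G_2]$ whose realized homomorphisms coincide, so both produce the same $W$. You call (ii) ``the obstacle'' without supplying it, and you attribute it to Lemma \ref{keylemma}(2), which is not where it comes from.

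Lemma \ref{keylemma}(2) is needed for something else. Choosing $h_1,h_2,h_3$ with $h_i(dx_i)=cdx_i$, it yields a single $h$ on $\bbR^{V'[x_1,x_2]}$. Then the filter with $x_1,x_2$ at $\alpha_1,\alpha_2$ and the filter with $x_1,x_3$ there (same model) both carry $Q$-conditions realizing $h$, lying below conditions on $d_0\cup\{\alpha_1\}$ and $d_0\cup\{\alpha_2\}$. By (i) and (ii), $A\cap S^{V'[x_i]}$ is then a Hamel basis of $S^{V'[x_i]}$ for all three $i$. The contradiction is immediate, with no cases:
\begin{itemize}
\item $\iota(x_2)\notin S^{V'}$, so its $A$-support contains an element of $V'[x_2]\setminus V'$;
\item that element lies in neither $V'[x_1]$ nor $V'[x_3]$, since the pairwise intersections of these models are $V'$;
\item hence $\iota(x_2)+\iota(x_3)$ and $\iota(x_1)$ get different $A$-representations.
\end{itemize}

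Your case split also fails on its own terms.
\begin{itemize}
\item Case 1 contains no actual argument.
\item Case 2 presupposes that the support of $\iota(x_3)$ lies in $V'[x_3]$ ``via the basis definition relativized to $V'[x_3]$''. That is exactly (i) and (ii) for the $x_3$-configuration, which you have not established.
\item Lemma \ref{keylemma}(1) is a statement about reals. It cannot simply be ``transferred through $\iota$'' to elements of $S^{V'[x_3]}$, which need not lie in $\iota[\bbR]$. Redoing its proof for $S$-valued Borel maps gives a continuous homomorphism $\bbR\to S$, not something of the form $a\iota(x)+b$.
\item Nothing makes a coordinate functional of $A$ Borel on a nonmeager set, since $A$ is only definable from $H$ in $V[K,G_\kappa]$.
\end{itemize}
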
 



That $W$ satisfies $\DC$ follows from standard arguments, using the fact that $W$
is an inner model of $V[K, G_{\kappa}]$ (a model of Choice), that it contains the reals of this model, and is closed under ordinal definability. Briefly, the point is that, given a tree $T$ in $W$ without terminal nodes, there exist a set $v$ in $V$ (a wellordering of a sufficiently large set, for instance), a sequence of real numbers $\langle x_{n} : n \in \omega\rangle$ and a branch $b$ through $T$ such that, for each $n \in \omega$, the $n$th element of $b$ is definable from $x_{n}$, $v$ and $\utilde{\dot{H}}_{K, G_{\kappa}}$.   


The following amalgamation lemma follows from Lemma \ref{version0} and the fact that mutually generic extensions have no new real numbers in common.  


\begin{lemma}\label{ohthree} Suppose that $(q_{1}, p_{1})$ and $(q_{2}, p_{2})$ are conditions in $Q \times P_{\kappa}$. Suppose that $q \in Q$ is weaker than both $q_{1}$ and $q_{2}$, with $d_{q} = d_{q_{1}} \cap d_{q_{2}}$, and 
that $B^{p_{1}}_{\alpha} \cap B^{p_{2}}_{\alpha} \in I^{+}$ for all $\alpha \in \supp(p_{1}) \cap \supp(p_{2})$. Then $(q_{1}, p_{1})$ and $(q_{2}, p_{2})$ are compatible. 
\end{lemma}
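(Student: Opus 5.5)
We will build an explicit common extension. Set $d = d_{q_{1}} \cup d_{q_{2}}$; it is countable, so $d$ is a legitimate first coordinate. The plan is to find a $P_{d}$-name $\dot{h}$ for a total homomorphism such that:
\begin{itemize}
\item there is a condition $p \leq p_{1}, p_{2}$ in $P_{\kappa}$;
\item $p$ forces that $\dot{h}$ extends both $\dot{h}_{q_{1}}$ and $\dot{h}_{q_{2}}$;
\item $(d, \dot{h}) \leq q_{1}, q_{2}$ in $Q$, after a routine adjustment below $p$.
\end{itemize}
We then check that $((d,\dot{h}), p)$ extends both given conditions.

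The condition $p$ is the obvious one. Its support is $\supp(p_{1}) \cup \supp(p_{2})$. Put $B^{p}_{\alpha} = B^{p_{1}}_{\alpha} \cap B^{p_{2}}_{\alpha}$ on the common support, which is $I$-positive by hypothesis. On the rest of the support, $B^{p}_{\alpha}$ is whichever of $B^{p_{1}}_{\alpha}, B^{p_{2}}_{\alpha}$ is defined.

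Now work in $V[G_{d}]$ for a generic $G_{d}$. Write $h_{i} = \dot{h}_{q_{i},G_{d_{q_{i}}}}$ and $h = \dot{h}_{q, G_{d_{q}}}$, where $d_{q} = d_{q_{1}} \cap d_{q_{2}}$. The domains are $\bbR^{V[G_{d_{q_{1}}}]}$ and $\bbR^{V[G_{d_{q_{2}}}]}$. Since $P_{d} = P_{d_{q}} \times P_{d_{q_{1}} \setminus d_{q}} \times P_{d_{q_{2}} \setminus d_{q}}$, the models $V[G_{d_{q_{1}}}]$ and $V[G_{d_{q_{2}}}]$ are mutually generic extensions of $V[G_{d_{q}}]$. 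Hence they share no new reals: $\bbR^{V[G_{d_{q_{1}}}]} \cap \bbR^{V[G_{d_{q_{2}}}]} = \bbR^{V[G_{d_{q}}]}$. This is the main point. It is the standard product-forcing fact that a real lying in two mutually generic extensions lies in the ground, applied over $V[G_{d_{q}}]$.

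On $\bbR^{V[G_{d_{q}}]}$, both $h_{1}$ and $h_{2}$ extend $h$, because $q_{1}, q_{2} \leq q$ and $1$ forces the inclusions. So $h_{1}$ and $h_{2}$ agree on the intersection of their domains, and these domains are subgroups. By Lemma~\ref{version0} they amalgamate to a partial homomorphism. By Lemma~\ref{divisiblelem} that partial homomorphism extends to a total homomorphism of $(\bbR,+)^{V[G_{d}]}$.

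This holds for every generic, so by the maximum principle (using choice in $V$) there is a $P_{d}$-name $\dot{h}$ forced by $1_{P_{d}}$ to be a total homomorphism extending both $\dot{h}_{q_{1}}$ and $\dot{h}_{q_{2}}$. Note that no condition in $P$ was needed for this step, since the agreement on $\bbR^{V[G_{d_{q}}]}$ is forced outright. Thus $(d,\dot{h}) \in Q$ lies below $q_{1}$ and $q_{2}$, and paired with $p$ it extends both $(q_{1},p_{1})$ and $(q_{2},p_{2})$.

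The only delicate step is the claim about no common new reals. It needs $d_{q}$ to be exactly $d_{q_{1}} \cap d_{q_{2}}$, so that the two extensions are genuinely mutually generic over $V[G_{d_{q}}]$. The hypothesis that $q$ is weaker than both conditions then supplies the agreement there.
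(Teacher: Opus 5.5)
Your proof is correct and is essentially the paper's argument, which the paper states only briefly: the lemma follows from Lemma~\ref{version0} together with the fact that mutually generic extensions have no new reals in common (here $V[G_{d_{q_1}}]$ and $V[G_{d_{q_2}}]$ over $V[G_{d_q}]$, using $d_q = d_{q_1}\cap d_{q_2}$). You have filled in the routine details the paper leaves implicit: the coordinatewise meet in $P_\kappa$, the extension to a total homomorphism by divisibility, and the maximum principle to obtain a name forced by $1_{P_d}$. The ``routine adjustment below $p$'' in your outline is never actually needed, as you yourself note later.
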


We note that the indices of the conditions in $P_{\kappa}$ play no role in the creation of the model $W$, and that any partial injection from $\kappa$ to $\kappa$ maps conditions to isomorphic conditions. 

\begin{remark}\label{autorem} If $d$ is a subset of $\kappa$, and $f \colon d \to \kappa$ is injective, then
$f$ induces a function $f_{Q}$ on the set of $q \in Q$ with $d_{q} \subseteq d$ and an isomorphism $f_{P} \colon P_{d} \to P_{f[d]}$ such that for each $q \in \dom(f_{Q})$ and $p \in P_{d}$, 
\begin{itemize}
\item $\supp(f_{P}(p)) = f[\supp(p)]$;
\item $B^{f_{P}(p)}_{f(\alpha)} = B^{p}_{\alpha}$ for all $\alpha \in \supp(p)$;
\item $d_{f_{Q}(q)} = f[d_{q}]$;
\item for any $V$-generic filter $G \subseteq P_{q_{d}}$, $f_{P}[G]$ is a $V$-generic filter for $P_{f[d_{q}]}$, and $\dot{h}_{q, G} = \dot{h}_{f_{Q}(q), f_{P}[G]}$. 
\end{itemize}
In the case where $d = \kappa$, we get that, if $(K, G_{\kappa})$ is $V$-generic for $Q*P_{\kappa}$, then $(f_{Q}[K],f_{P}[G_{\kappa}])$ is also $V$-generic, and the two generic extensions give rise to the same model $W$. 
\end{remark}





Suppose toward a contradiction that $\utilde{\dot{B}}$ is a $Q$-name for a $P_{\kappa}$-name for a Hamel basis for $(S, +_{S})$ in $\HOD_{V, \bbR^{V[G_{\kappa}]}, \utilde{\dot{H}}_{K, G_{\kappa}}}$, as forced by some $Q\times P_{\kappa}$-condition $(q_{0}, p_{0})$. 
Replacing $(q_{0}, p_{0})$ with a stronger condition if necessary, we may assume that it is normal, and that there exist
a $v \in V$, a formula $\varphi$, a cardinal $\lambda$ and $P_{d_{q_{0}}}$-names $\dot{r}_{1},\ldots,\dot{r}_{k}$ for elements of $\bbR$ such that 
\[(q_{0},p_{0}) \forces_{Q \times P_{\kappa}} \utilde{\dot{B}} = \{ y \in S : V_{\lambda}[K,G_{\kappa}] \models \varphi(y, v, \utilde{\dot{H}}_{K, G_{\kappa}}, \dot{r}_{1,G_{d_{q_{0}}}},\ldots,\dot{r}_{k,G_{d_{q_{0}}}})\}.\]

Below we write $\tau \in \utilde{\dot{B}}$ as an abbreviation for  the statement \[V_{\lambda}[K,G_{\kappa}] \models \varphi(\tau_{G_{\kappa}}, v, \utilde{\dot{H}}_{K, G_{\kappa}}, \dot{r}_{1,G_{d_{q_{0}}}},\ldots,\dot{r}_{k,G_{d_{q_{0}}}}).\]

The first key point is that, for each $d \subseteq \kappa$, the restriction of $\utilde{\dot{B}}_{K, G_{\kappa}}$ to $S^{V[G_{d}]}$ is decided by the restrictions of $K$ and $G_{\kappa}$ to $d$. 

\begin{lemma}\label{locallem1} 
For each normal $(q,p) \leq (q_{0}, p_{0})$, for each $P_{d_{q}}$-name $\tau$ for an element of $S$, the statement $\tau \in \utilde{\dot{B}}$ is decided by $(q,p')$ for densely many $p' \in P_{d_{q}}$ below $p$. Moreover, for each such $\tau$ there is a $P_{d_{q}}$-name $\sigma$ for a finite subset of $\utilde{\dot{B}}$ such that $p_{0}$ forces that the realization of $\tau$ will be a linear combination of the members of $\sigma$.  
\end{lemma}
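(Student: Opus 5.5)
We argue by a homogeneity/automorphism argument, using Remark \ref{autorem} together with Lemma \ref{ohthree}. Fix a normal $(q,p) \leq (q_{0},p_{0})$ and a $P_{d_{q}}$-name $\tau$. Given any $p_{1} \leq p$ in $P_{d_q}$, choose $(q_{2},p_{2}) \leq (q,p_{1})$ in $Q \times P_{\kappa}$ deciding $\tau \in \utilde{\dot{B}}$, and assume it is normal. Let $e = d_{q_{2}} \setminus d_{q}$, and write $p_{2} = p_{2}\restrict d_{q} \cup p_{2}\restrict e$. The candidate for $p'$ is $p_{2} \restrict d_{q}$. We must show that $(q, p_{2}\restrict d_q)$ already decides the statement.

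Suppose instead that some $(q_{3},p_{3}) \leq (q,p_{2}\restrict d_{q})$ forces the opposite decision; again take it normal. Let $f$ be an injection from $d_{q_{3}}$ to $\kappa$ which is the identity on $d_{q}$ and sends $d_{q_{3}} \setminus d_{q}$ to a set disjoint from $d_{q_{2}}$. This is possible since all of these sets are countable and $\kappa$ is uncountable. By Remark \ref{autorem}, $(f_{Q}(q_{3}), f_{P}(p_{3}))$ forces the same decision about $\tau \in \utilde{\dot{B}}$ as $(q_{3},p_{3})$. The relevant points are these:
\begin{itemize}
\item $\tau$, $v$, $\varphi$ and $\dot r_{1},\ldots,\dot r_k$ are all fixed, since $f$ is the identity on $d_q \supseteq d_{q_0}$;
\item the model $W$, and the definition of $\utilde{\dot{B}}$ inside it, is invariant under the induced automorphism;
\item extending $f$ to a permutation of $\kappa$ lets the $d=\kappa$ case of the remark apply.
\end{itemize}
Now $d_{f_Q(q_3)} \cap d_{q_2} = d_q$, and $q$ is weaker than both $f_{Q}(q_{3})$ and $q_{2}$. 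On $d_{q}$ the supports of $f_P(p_3)$ and $p_2$ overlap in a compatible way, since $p_{3}\restrict d_q \leq p_{2}\restrict d_q$. So Lemma \ref{ohthree} makes the two conditions compatible, which is a contradiction. This gives the density claim.

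For the second assertion, we work in $V[K,G_\kappa]$. The realization of $\tau$ is a finite linear combination of members of the Hamel basis $\utilde{\dot{B}}$. We want the supporting finite set to lie in $S^{V[G_{d_q}]}$ and to be definable there. The main obstacle is showing that the basis elements used by $\tau$ come from $V[G_{d_q}]$ rather than from reals with new coordinates.

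The plan is to apply the same automorphism trick to elements. Suppose some basis element $z$ in the support of $\tau$ is a $P_{d}$-name with $d \not\subseteq d_q$. Copy the part of $d$ outside $d_q$ to a disjoint set of coordinates, using the construction above, and amalgamate via Lemma \ref{ohthree}. The result is a single generic in which $\tau$ has two distinct representations over the basis: one uses $z$, and the other uses a copy $z'$ that is mutually generic with $z$ over $V[G_{d_q}]$. For this step we need that $z \neq z'$ and that $z'$ is also in the basis, the latter holding by the invariance above. Uniqueness of representations then gives a contradiction.

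It follows that the support of $\tau$ consists of elements of $S^{V[G_{d_q}]}$. By the first part, membership of such elements in $\utilde{\dot{B}}$ is decided densely by conditions of the form $(q,p')$. So in $V[G_{d_q}]$ we can define $\sigma$ as the unique finite subset of elements decided to be in $\utilde{\dot{B}}$ whose span contains $\tau$. Taking $\sigma$ to be the name for this set gives the required $P_{d_q}$-name.
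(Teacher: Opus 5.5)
Your first part is essentially the paper's argument: copy one of two conditions that decide $\tau \in \utilde{\dot{B}}$ oppositely so that its new coordinates avoid the other's, then amalgamate with Lemma \ref{ohthree}. Your choice $p' = p_{2} \restrict d_{q}$ is a slightly cleaner way of arranging the agreement on $d_{q}$ that the paper simply asserts.

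For the second part you take a different route.
\begin{itemize}
\item The paper applies the same decision argument to countably many automorphism-invariant statements about the support $\sigma$ of $\tau$: ``$\sigma$ has $n$ elements'' and ``$\sigma$ meets the basic open set $O$''. A finite subset of a Polish space is recovered from the basic open sets it meets, so $G_{d_{q}}$ decides $\sigma$. Nothing about Hamel bases is used beyond the definability of $\sigma$ from $\tau$, which is why the paper can reuse the argument for $\utilde{\dot{F}}$ in Section \ref{2ndmodelsec}.
\item You instead combine uniqueness of representations with mutual genericity of the copied coordinates to show that the support lies in $S^{V[G_{d_{q}}]}$. You then read $\sigma$ off in $V[G_{d_{q}}]$ from the elements that part one decides into the basis (the paper's $\dot{B}^{q}$). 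This gives a concrete description of $\sigma$ and mirrors the final contradiction in the proof of Theorem \ref{mainthrm}, at the cost of being specific to bases.
\end{itemize}

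As written, your contradiction needs tightening. Knowing only that $z$ and $z'$ both lie in the support with $z \neq z'$ contradicts nothing, since a single representation can use both. Here is the repair.
\begin{itemize}
\item First extend to $(q',p') \leq (q,p)$ deciding the whole representation $\tau = \sum_{j} c_{j}\dot{z}_{j}$. Each $\dot{z}_{j}$ should be a $P_{d_{q'}}$-name, and $\dot{z}_{1}$ should be forced to lie outside $S^{V[G_{d_{q}}]}$. That, not ``$z$ is a $P_{d}$-name with $d \not\subseteq d_{q}$'', is the right hypothesis.
\item Let $f$ be identity on $d_{q}$ and send $e = d_{q'} \setminus d_{q}$ off $d_{q'}$. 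The copied condition forces $\tau = \sum_{j} c_{j} f(\dot{z}_{j})$.
\item In the amalgamated generic, uniqueness of representations gives $z_{1} = f(z_{k})$ for some $k$. Hence $z_{1} \in V[G_{d_{q}}][G_{e}] \cap V[G_{d_{q}}][G_{f[e]}]$.
\item These two models are mutually generic extensions of $V[G_{d_{q}}]$, so their intersection is $V[G_{d_{q}}]$. Thus $z_{1} \in V[G_{d_{q}}]$, a contradiction.
\end{itemize}
Finally, $\sigma$ should be the support of the unique representation of $\tau$ over $\dot{B}^{q}$, not ``the unique finite subset whose span contains $\tau$'', since supersets also have that property.
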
  

\begin{proof} 
If the first part of lemma failed for some $(q, p)$ and $\tau$ we could find conditions $(q_1,p_1), (q_2,p_2) \leq (q,p)$ forcing opposite truth values for the statement $\tau \in \utilde{\dot{B}}$, with $p_{1} \restrict q_{d} = p_{2} \restrict q_{d}$. 
Applying Remark \ref{autorem} to an injection $f \colon d_{q_{2}} \to \kappa$ fixing $d_{q}$ pointwise and mapping $d_{q_{2}} \setminus d_{q}$ to a set disjoint from $d_{q_{1}} \setminus d_{q}$, we may assume in addition that
$d_{q_{1}} \cap d_{q_{2}} = d_{q}$.
This is impossible since  by Lemma \ref{ohthree}, $(q_{1}, p_{1})$ and $(q_{2}, p_{2})$ would also be compatible. 

The second part of the lemma is proved similarly, by letting $\sigma$ be any name for the unique finite subset of $\utilde{\dot{B}}$ of which the realization of $\tau$ will be a linear combination. An argument like the one just given shows that $G_{d_{q}}$ must decide the size of $\sigma$ and the members of $\sigma$, 
using (for instance) the fact that any finite set $S$ is coded by the set of basic open subsets it intersects. 
\end{proof} 


We can then let, for each $q \leq q_{0}$, $\dot{B}^{q}$ be the $P_{d_{q}}$-name for the value of $\utilde{\dot{B}}_{K, G_{\kappa}} \cap S^{V[G_{d_{q}}]}$, for any $Q\times P_{\kappa}$-generic filter $(K, G_{\kappa})$ containing $(q_{0}, p_{0})$. Then $p_{0}$ forces in $P_{d_{q}}$ that the realization of $\dot{B}^{q}$ will be a maximal linearly independent subset of the space $S$ as interpreted in the $P_{d_{q}}$-extension. 

We now show that the realization of $\dot{B}^{q}$ depends only on the realization of $\dot{h}_{q}$, which gives us one half of our desired contradiction. 

\begin{lemma}\label{rephraselem} Let $d$ be a countable subset of $\kappa$, and suppose that $q_{1}$ and $q_{2}$ are conditions below $q_{0}$ with $d_{q_{1}} = d_{q_{2}} = d$. 
If $G^{1}_{d}$
and $G^{2}_{d}$ are $V$-generic filters for 
$P_{d}$ such that 
\begin{itemize}
\item $p_{0} \in G^{1}_{d} \cap G^{2}_{d}$, 
\item $V[G^{1}_{d}] = V[G^{2}_{d}]$ and 
\item $\dot{h}_{q_{1}, G^{1}_{d}} = \dot{h}_{q_{2}, G^{2}_{d}}$,
\end{itemize}
then $\dot{B}^{q_{1}}_{G^{1}_{d}} = \dot{B}^{q_{2}}_{G^{2}_{d}}$. 
\end{lemma}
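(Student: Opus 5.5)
The plan is to realize both $(q_{1}, G^{1}_{d})$ and $(q_{2}, G^{2}_{d})$ inside a single model $V[K, G_{\kappa}]$ in which the homomorphism $\utilde{\dot{H}}_{K, G_{\kappa}}$ is the same. Then the defining formula for $\utilde{\dot{B}}$ is evaluated over the same structure in both cases, and one restricts to $S^{V[G^{1}_{d}]} = S^{V[G^{2}_{d}]}$.

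\textbf{Step 1: common homomorphism.} Work in $V[G^{1}_{d}] = V[G^{2}_{d}]$ and let $h = \dot{h}_{q_{1}, G^{1}_{d}} = \dot{h}_{q_{2}, G^{2}_{d}}$. I will build, in $V$, a $Q$-generic $K^{1}$ containing $q_{1}$ and a $Q$-generic $K^{2}$ containing $q_{2}$. Each is extended over $\kappa \setminus d$ by the same $P_{\kappa\setminus d}$-generic $G'$, mutually generic with $V[G^{1}_{d}]$. Put $G^{j}_{\kappa} = G^{j}_{d} \times G'$. Then $V[G^{1}_{\kappa}] = V[G^{2}_{\kappa}]$, and the two models have the same reals.

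The goal is to arrange $\utilde{\dot{H}}_{K^{1}, G^{1}_{\kappa}} = \utilde{\dot{H}}_{K^{2}, G^{2}_{\kappa}}$. Rather than building $K^{2}$ independently, I will try to transport the generic: find an automorphism, or a dense embedding, of the part of $Q$ below $q_{1}$ onto the part below $q_{2}$, compatible with renaming $G^{1}_{d}$ as $G^{2}_{d}$. It should send each condition $r \le q_{1}$ to a condition $r'$ with the same $d_{r}$ whose name, read via $G^{2}$ instead of $G^{1}$, has the same realization. This uses that $q_{1}$ and $q_{2}$ evaluate to the same $h$, together with the freedom in Lemma~\ref{divisiblelem} to choose extensions. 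Concretely, one re-interprets any $P_{d_r}$-name by composing with the isomorphism of generic objects $G^{1}_{d} \mapsto G^{2}_{d}$, which is available on a condition since $V[G^{1}_{d}] = V[G^{2}_{d}]$. Genericity of the transported filter is then checked by the usual density argument, using Lemmas~\ref{ohone} and~\ref{ohtwo}.

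\textbf{Step 2: evaluate $\utilde{\dot{B}}$.} With a common full homomorphism $H$ and the same reals, both generics contain $(q_{0}, p_{0})$. The defining formula then yields, for $j = 1,2$,
\[
B^{j} = \{ y \in S : V_{\lambda}[K^{j}, G^{j}_{\kappa}] \models \varphi(y, v, H, \dot{r}_{1,G^{j}_{d_{q_{0}}}},\ldots)\}.
\]
By Lemma~\ref{locallem1}, $\dot{B}^{q_{j}}_{G^{j}_{d}} = B^{j} \cap S^{V[G^{j}_{d}]}$. Moreover, since $V_{\lambda}[K^{j},G^{j}_{\kappa}]$ depends only on the model and not on the filter, once the models coincide the two sets differ at most through the parameters $\dot{r}_{i}$.

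\textbf{Main obstacle.} The hard point is the parameters: $\dot{r}_{i,G^{1}_{d_{q_{0}}}}$ need not equal $\dot{r}_{i,G^{2}_{d_{q_{0}}}}$. I would first try to eliminate this at the reduction stage. Because $\utilde{\dot{B}}$ names a single set that is $\HOD$-definable from $V$, the reals and $H$, I would strengthen $(q_{0},p_{0})$ so that $\dot{r}_{1},\ldots,\dot{r}_{k}$ are absorbed into $\varphi$ as reals definable from $H$ restricted to the reals of $V[G_{d_{q_{0}}}]$, or into $v$ as ground-model reals. Alternatively, I would replace the $\dot{r}_{i}$ by a name invariant under the transport of Step~1, so that both generics see the same parameter values. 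If this succeeds, $B^{1} = B^{2}$, and intersecting with $S^{V[G^{1}_{d}]}$ gives $\dot{B}^{q_{1}}_{G^{1}_{d}} = \dot{B}^{q_{2}}_{G^{2}_{d}}$. The construction of the transported generic in Step~1 is the other delicate point, but I expect it to be routine given Remark~\ref{autorem}.
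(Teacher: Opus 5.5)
Your proposal has a genuine gap, and it shows up in two places.

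The obstacle you flag about the parameters is real, and neither of your remedies resolves it. The $\dot r_i$ are whatever $P_{d_{q_0}}$-names for reals the $\HOD$-definition of $\utilde{\dot B}$ needs. Nothing makes them definable from $H$ and elements of $V$. Strengthening $(q_0,p_0)$ cannot arrange this either: $\dot h_{q_0}$ already names a total homomorphism on $\bbR^{V[G_{d_{q_0}}]}$, so $H$ on those reals is fixed in advance. Nor can you replace the $\dot r_i$ by ``invariant'' names without changing the set that $\varphi$ defines. As written, your argument stops at ``if this succeeds''.

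The fix is not to eliminate the parameters. Instead, use the hypothesis that holds wherever the lemma is applied. Condition (1) gives $G^1_d\restrict P_{d_{q_0}}=G^2_d\restrict P_{d_{q_0}}$, hence $\dot r_{i,G^1_d}=\dot r_{i,G^2_d}$ trivially. The paper's proof relies on this tacitly when it says it suffices that the two models $W$ coincide.

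Step 1 is not routine, and as sketched it fails. Remark \ref{autorem} only permutes indices, but $G^2_d$ is in general not an index-permuted copy of $G^1_d$: in the application, coordinate $\alpha_2$ carries $x_1-x_2$ instead of $x_2$. Reinterpreting names through the interdefinability of $G^1_d$ and $G^2_d$ is valid only below some condition, whereas the order on $Q$ is defined by what $1_{P_{d_r}}$ forces.

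Suppose homogeneity of the Cohen algebra gives you an automorphism $\pi$ of the completion of $P_d$ with $\pi[G^1_d]=G^2_d$, and hence an automorphism of the conditions in $Q$ whose support contains $d$. Even then, the image of $q_1$ agrees with $q_2$ only below some condition of $G^2_d$. So it may be incompatible with $q_2$ in $Q$, and the transported filter need not contain $q_2$. Repairing that is exactly the lemma in the special case $G^1_d=G^2_d$.

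The missing idea is the paper's back-and-forth construction:
\begin{itemize}
\item Fix a common $G^*$ on $\kappa\setminus d$, and pass to an extension of $V[G_1]$ in which $\cP(Q)^V$ is countable.
\item Build $K_1\ni q_1$ and $K_2\ni q_2$ simultaneously through ``good pairs'' $(q_1',q_2')$ with $d_{q_1'}=d_{q_2'}$ and $\dot h_{q_1',G_1}=\dot h_{q_2',G_2}$.
\item Given $q_1''\le q_1'$ with support $d''$, use $V[G_1\restrict d'']=V[G_2\restrict d'']$ to choose a $P_{d''}$-name whose $G_2$-value is $\dot h_{q_1'',G_1}$.
\item Redefine that name off a condition of $G_2$, using Lemma \ref{divisiblelem}, so that $1_{P_{d''}}$ forces it to extend $\dot h_{q_2'}$.
\item Do the symmetric steps for $K_2$, and meet the countably many dense sets. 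This yields generics with the same $H$.
\end{itemize}
Only one name is patched at a time, so no coherent global transport of $Q$ is ever needed.
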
 

\begin{proof}
Let $G^{*}$ be $V[G^{1}_{d}]$-generic for $P_{\kappa \setminus d}$. Then $(G^{1}_{d}, G^{*})$ and $(G^{2}_{d}, G^{*})$ naturally induce $V$-generic filters $G_{1}$ and $G_{2}$ for $P_{\kappa}$, and $V[G_{1}] = V[G_{2}]$. It suffices to see that there exist $V[G_{1}]$-generic filters $K_{1}$ and $K_{2}$ for $Q^{V}$, with $q_{1} \in K_{1}$ and $q_{2} \in K_{2}$ such that the models $W$ induced by $(K_{1}, G_{1})$ and $(K_{2}, G_{2})$ are the same. We can do this in a forcing extension of $V[G_{1}]$ in which $\cP(Q)^{V}$ is countable. Say that a pair $(q'_{1}, q'_{2})\in Q^{V} \times Q^{V}$ is \emph{good} if $q'_{1} \leq q_{1}$, $q'_{2} \leq q_{2}$, $d_{q'_{1}} = d_{q'_{2}}$ and $\dot{h}_{q'_{1},G_{1} \restrict d_{q'_{1}}} = \dot{h}_{q'_{2}, G_{2} \restrict d_{q'_{2}}}$. Then $(q_{1}, q_{2})$ is good, and it suffices to show that whenever $(q'_{1}, q'_{2})$ is good and $q''_{1} \leq q'_{1}$, there exists a $q''_{2} \leq q'_{2}$ such that $(q''_{1}, q''_{2})$ is good (since the corresponding fact with $1$ and $2$ reversed will follow by symmetry). 

Given such $(q'_{1}, q'_{2})$ and $q''_{1}$, let $d_{q''_{2}}$ be $d_{q''_{1}}$, and call this set $d''$. Since $V[G^{1}_{d}] = V[G^{2}_{d}]$, \[V[G_{1} \restrict d''] = V[G_{2} \restrict d''],\] so
there exists a $P_{d''}$-name $\dot{h}$ such that $\dot{h}_{G_{2} \restrict d''} = \dot{h}_{q''_{1},G_{1}\restrict d''}$. Since $\dot{h}_{q''_{1},G_{1}\restrict d''}$ extends $\dot{h}_{q'_{2}, G_{2} \restrict d_{q'_{2}}}$, we may assume that every condition in $P_{d''}$ forces that $\dot{h}$ extends $\dot{h}_{q'_{2}}$, and we can let $\dot{h}_{q''_{2}}$ be this $\dot{h}$. 
\end{proof}

We now derive the other half of the contradiction.
Fix two ordinals, $\alpha_{1} < \alpha_{2}$ in $\kappa \setminus d_{q_{0}}$ and let 
\begin{itemize}
\item $d_{0} = d_{q_{0}}$, 
\item $d_{1} = d_{0} \cup \{\alpha_{1}\}$, 
\item $d_{2} = d_{0} \cup \{\alpha_{2}\}$ and 
\item 
$d = d_{q_{0}} \cup \{\alpha_{1}, \alpha_{2}\}$.
\end{itemize}  
We will find generic filters $G_{d}$ and $G'_{d}$ for $P_{d}$, such that
\begin{enumerate}
\item $p_{0}\in G_{d} \restrict P_{d_{0}} = G'_{d} \restrict P_{d_{0}}$,
\item $G_{d} \restrict P_{\{\alpha_{1}\}} = G'_{d} \restrict P_{\{\alpha_{1}\}}$ and
\item $V[G_{d}] = V[G'_{d}]$, 
\end{enumerate}
and a homomorphism $h \in V[G_{d}]$ from $(\bbR^{V[G_{d}]}, +)$ to itself such that
\begin{itemize}
\item $h$ extends  $\dot{h}_{q_{0},G_{d} \restrict P_{d_{0}}}$,
\item $h \restrict V[G_{d} \restrict P_{d_{1}}] \in  V[G_{d} \restrict P_{d_{1}}]$,
\item $h \restrict V[G_{d} \restrict P_{d_{2}}] \in  V[G_{d} \restrict P_{d_{2}}]$ and
\item $h \restrict V[G'_{d} \restrict P_{d_{2}}] \in  V[G'_{d} \restrict P_{d_{2}}]$,
\end{itemize}
in such a way that no Hamel basis B for $S^{V[G_{d}]}$ can have the property that 
\begin{itemize}
\item $B \cap V[G_{d} \restrict P_{d_{1}}]$ is a Hamel basis for $S$ in $V[G_{d} \restrict P_{d_{1}}]$,
\item $B \cap V[G_{d} \restrict P_{d_{2}}]$ is a Hamel basis for $S$ in $V[G_{d} \restrict P_{d_{2}}]$ and
\item $B \cap V[G'_{d} \restrict P_{d_{2}}]$ is a Hamel basis for $S$ in $V[G'_{d} \restrict P_{d_{2}}]$.
\end{itemize}
There will exist then $P_{d}$-names $\dot{h}$, $\dot{h}'$, $\dot{h}_{1}$, $\dot{h}_{2}$ and $\dot{h}'_{2}$ such that
\begin{itemize}
\item $(d, \dot{h})$, $(d, \dot{h}')$, $(d_1, \dot{h}_{1})$, $(d_{2}, \dot{h}_{2})$ and $(d_{2}, \dot{h}'_{2})$ are conditions in $Q$, 
\item $(d, \dot{h}),(d, \dot{h}') \leq (d_{1}, \dot{h}_{1}) \leq q_{0}$, 
\item $(d, \dot{h}) \leq (d_{2}, \dot{h}_{2}) \leq q_{0}$, 
\item $(d, \dot{h}') \leq (d_{2}, \dot{h}'_{2})\leq q_{0}$ and 
\item $\dot{h}_{G_{d}} = \dot{h}'_{G'_{d}} = h$. 
\end{itemize} 
This will finish the proof, by contradicting Lemma \ref{rephraselem}.



Let $G_{d_{q_{0}}}$ be a $V$-generic filter for $P_{d_{q_{0}}}$ containing $p_{0}$. 
Let $x_{1}$ and $x_{2}$ be mutually $P$-generic reals over $V[G_{d_{q_{0}}}]$, and let $x_{3} = x_{1}-x_{2}$. Let $G_{d}$ be the $P_{d}$-generic filter extending $G_{d_{q_{0}}}$ giving rise to $x_{1}$ in coordinate $\alpha_{1}$ and $x_{2}$ in coordinate $\alpha_{2}$.  Let $G'_{d}$ be the $P_{d}$-generic filter extending $G_{d_{q_{0}}}$ giving rise to $x_{1}$ in coordinate $\alpha_{1}$ and $x_{3}$ in coordinate $\alpha_{2}$. Then $G_{d}$ and $G'_{d}$ satisfy conditions (1)-(3) above. 

Fix a real number $c \in V[G_{d_{q_{0}}}]$, and for each $i \in \{1,2,3\}$ let $h_{i}$ in $V[G_{d_{q_{0}}}][x_{i}]$ be a homomorphism from $(\bbR,+)^{V[G_{d_{q_{0}}}][x_{i}]}$ to itself extending  the homomorphism $\dot{h}_{q_{0}, G_{d_{q_{0}}}}$, with $h_{i}(dx_{i}) = cdx_{i}$ for all $d \in \bbR^{V[G_{d_{q_{0}}}]}$. 
Applying Lemma \ref{keylemma}, let $h$ be an amalgamation of $h_{1}$, $h_{2}$ and $h_{3}$ in $V[G_{d}]$. 

If $\dot{B}^{q}_{G_{d_{q}}} = \dot{B}^{q'}_{G_{d_{q'}}}$, then the restriction of this set to each of the models $V[G_{d_{q_{0}}}][x_{1}]$, $V[G_{d_{q_{0}}}][x_{2}]$ and $V[G_{d_{q_{0}}}][x_{3}]$ would be a Hamel basis in the corresponding model for the corresponding version of $S$.
The remaining point is that there cannot be a Hamel basis $B$ for $(S, +_{S})$ with this property. 
To see this, note first that since $x_{1}$, $x_{2}$ and $x_{3}$ are pairwise mutually generic over $V[G_{d_{q_{0}}}]$, the intersection of any two of these models is $V[G_{d_{q_{0}}}]$. Now let $\pi$ be (in $V$), a continuous injective homomorphism from $(\bbR, +)$ to $(S, +_{S})$. 
If a $B$ as above did exist, then each of $\pi(x_{1})$, $\pi(x_{2})$ and $\pi(x_{3})$ would be a linear combination of elements of the corresponding set $B \cap V[G_{d_{q_{0}}}][x_{i}]$ (with rational coefficients), and a linear combination of elements of $B \cap V[G_{d}]$ in a unique way. The equation $\pi(x_{3}) +_{S} \pi(x_{2}) = \pi(x_{1})$ however then gives two different linear combinations for $\pi(x_{1})$. This finishes the proof of Theorem \ref{mainthrm}.



\begin{remark}
The generic homomorphism given by the construction above is surjective but not injective. The approach above leads to several possible variations, including the following. To prove the corresponding versions of Lemma \ref{divisiblelem}, \ref{version0} and \ref{version}, and apply the second part of Lemma \ref{keylemma} with the appropriate value of $c$ if necessary. The corresponding version of $Q$ is the natural one, except possibly for the first case below.  
\begin{enumerate}
\item Requiring $h$ to be injective but not surjective. This seems to be the most subtle of the five variations listed here. Naturally, one should pick a real to keep out of the range of the generic homomorphism. However, letting $\dot{h}$ be any name for an injective homomorphism missing this real will not work. Instead, one can naturally associate to each real $x$ in the $P_{\kappa}$-extension the minimal $a \subseteq \kappa$ for which $x \in V[G_{a}]$, and require that $h(x)$ has the same minimal $a$. Additional conditions should be added to implement Lemma \ref{keylemma}. We leave the details to the interested reader.  
\item Requiring $h$ to be bijective. Here it is natural to require each $\dot{h}$ to be a name for an automorphism of $(\bbR, +)$. 
\item Requiring the range of $h$ to be $(\bbQ, +)$. Here the adaptation is straightforward, letting $c$ be $0$ in the application of Lemma \ref{keylemma}. 
\item Requiring that $h(h(x)) = x$ for all $x \in \bbR$. Here again the adaptation is straightforward, letting $c$ be $1$ in the application of Lemma \ref{keylemma}. 
\item Requiring $\bbR/\{ x : h(x) = x\}$ to have dimension 2. Here is is natural to start with an an automorphism $h$ in the ground model derived from transposition of a Hamel basis, and work below a condition of the form $(\emptyset, \check{h})$. Again, let $c = 1$. 
\end{enumerate}
\end{remark}
   
An interesting question (asked of us by Asaf Karagila) is whether the existence of discontinuous homomorphisms of the types listed above can be separated. A homomorphism of the fifth type easily gives one of the fourth type, but otherwise these questions are open as far as we know. The following related question also appears to be open.


\begin{question}[Schilhan, Zapletal]\label{SZq} Does the existence of a Hamel basis for $\bbR^{2}$ imply the existence of a Hamel basis for $\bbR$? 
\end{question}

We note that if $f \colon \bbR^{n} \to \bbR^{m}$ is a discontinuous homomorphism, then for some $i,j$ the function $x \mapsto f_{i}(x\mathbf{e}_{j})$ is a discontinuous homomorphism from $\bbR$ to $\bbR$, where $f_{1},\ldots,f_{m}$ are the component functions of $f$ and $\mathbf{e}_{1},\ldots,\mathbf{e}_{n}$ are the standard basis vectors for $\mathbb{R}^{n}$. It follows that a negative answer to Question \ref{SZq} would give another proof of the main result of this paper. 

\section{Internal direct sums}\label{idssec}

We say that collection of subspaces $\cW = \{ W_{i} : i \in I\}$ of a vector space $S$ is \emph{linearly independent} if the trivial subspace $\{0\}$ is not in $\cW$ and $0$ cannot be written as a sum of nonzero elements of a nonempty set of distinct $W_{i}$'s. If in addition every element of $S$ can be written as a sum of elements of the $W_{i}$'s (i.e., if $\cW$ spans $S$), $S$ is said to be equal to 
the \emph{internal direct sum} of $\cW$ (in symbols, $S = \bigoplus \cW$), and $\cW$ is said to be a \emph{maximal} linearly independent family. We write $\sum\cW$ or $\sum_{i \in I}W_{i}$ for the span of $\cW$. 
In this section and the next we generalize the results above by rephrasing them in terms of internal direct sums. 

\begin{remark}\label{conrem} Both Hamel bases and discontinuous homomorphisms are related to internal direct sums. 
\begin{itemize}
\item If $B$ is a Hamel basis for a vector space $\cS$ (over $\bbQ$), then $S$ is the internal direct sum of the one-dimensional spaces $\{ bq : q \in \bbQ\}$ ($b \in B$). 

\item If a topological vector space $S$ (again over $\bbQ$) is the internal direct sum of $\{ W_{i} : i \in I\}$ with $|I| \geq 2$, then one gets a discontinuous homomorphism from $S$ to $S$ by (for instance) sending one of these two classes to $0$ and the members of the other classes to themselves. 
\end{itemize}
\end{remark} 



We will be concerned only with the case where $S$ is the vector space $\bbR$ over $\bbQ$. 

Internal direct sums can be categorized by the dimensions of the corresponding subspaces. We write $\Phi_{\rmc}(\bbR)$ for the statement that $\bbR$ is equal to an internal direct sum of countable-dimensional spaces (this notation will be generalized in Section \ref{quessec}). 
The following generalizes one version of Theorem \ref{mainthrm}. 

\begin{theorem}\label{mainthrm2a}
If $\ZFC$ is consistent then so is $\ZF$ + $\neg \Phi_{\rmc}(\bbR)$ + ``$\bbR$ is equal to an internal direct sum of an uncountable family of subspaces."
\end{theorem}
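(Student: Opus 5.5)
I will follow the template of Section~\ref{1stmodelsec}, replacing the generic homomorphism by a generic internal direct sum decomposition. Fix $\kappa$ as before and let $Q'$ consist of pairs $(d,\dot{\cW})$. Here $d$ is a countable subset of $\kappa$, and $\dot{\cW}$ is a $P_{d}$-name for a decomposition of $\bbR^{V[G_{d}]}$ as an internal direct sum indexed by $\omega_{1}$. I would code this as a name for a homomorphism $\dot{h}$ from $(\bbR,+)$ onto the direct sum $\bigoplus_{\omega_{1}}\bbR$ (equivalently, a coherent family of projections $\dot{h}^{\xi}$, $\xi<\omega_{1}$). The ordering is extension of all the projections. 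Each $W_{\xi}$ is required to be nontrivial already in $V$, so that the family is uncountable.

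The analogues of Lemmas~\ref{ohone} and~\ref{ohtwo} go through as before, since a partial family of commuting projections defined on a subgroup extends by divisibility, in the spirit of Lemma~\ref{divisiblelem}. Countable closure follows because the decomposition of a real of $V[G_{d}]$ is decided at a countable stage. The model is $W=\HOD_{V,\bbR^{V[G_{\kappa}]},\cW}$, where $\cW$ is the union of the generic decompositions. In $W$, $\bbR$ is the internal direct sum of the uncountable family $\cW$: every real lies in some $V[G_{d}]$ with $(d,\dot{\cW})$ in the generic filter, so it is a finite sum from distinct $W_{\xi}$ in a unique way.

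For $\neg\Phi_{\rmc}(\bbR)$, suppose a name $\utilde{\dot{\cB}}$ for a countable-dimensional decomposition is definable in $W$ from $v$, $\cW$ and finitely many reals. Lemmas~\ref{ohthree}, \ref{locallem1}, Remark~\ref{autorem} and Lemma~\ref{rephraselem} transfer verbatim. They show that the restriction $\dot{\cB}^{q}$ to $\bbR^{V[G_{d_{q}}]}$ is computed in $V[G_{d_{q}}]$ and depends only on the realization of $\dot{\cW}_{q}$. Here ``restriction'' means the family of intersections $B\cap V[G_{d_{q}}]$, together with the fact that each real of $V[G_{d_{q}}]$ has its decomposition in that model.

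I then take mutually generic $x_{1},x_{2}$ over $V[G_{d_{q_{0}}}]$ and set $x_{3}=x_{1}-x_{2}$. I would build decompositions $h_{i}$ in $V[G_{d_{q_{0}}}][x_{i}]$ extending $\dot{h}_{q_{0}}$ and sending $d x_{i}$ into a fixed summand, say $W_{0}$, coordinatewise as $c\,d x_{i}$. Applying part~2 of Lemma~\ref{keylemma} coordinatewise amalgamates them. As before, this yields two generic filters $G_{d},G'_{d}$ with the same extension and the same amalgamated decomposition but different restrictions to coordinate $\alpha_{2}$.

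The remaining point, which I expect to be the main obstacle, is the replacement for the final Hamel-basis argument. We have a countable-dimensional decomposition $\cB$ of $\bbR^{V[G_{d}]}$ whose restrictions to $M_{i}=V[G_{d_{q_{0}}}][x_{i}]$ are decompositions there, and $M_{i}\cap M_{j}=M_{0}=V[G_{d_{q_{0}}}]$. Write $x_{i}=\sum_{B}\pi_{B}(x_{i})$ with $\pi_{B}(x_{i})\in B\cap M_{i}$. Applying $\pi_{B}$ to $x_{1}-x_{2}=x_{3}$ gives $\pi_{B}(x_{1})-\pi_{B}(x_{2})=\pi_{B}(x_{3})$ for every $B$.

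Each $B\cap M_{i}$ is countable-dimensional, so it is spanned by countably many reals of $M_{i}$; the genuine obstacle is that, unlike in the Hamel case, $B\cap M_{i}$ need not be contained in a single $M_{i}$. I would handle this by choosing a component $B$ with $\pi_{B}(x_{1})\notin M_{0}$; such a $B$ exists, since otherwise $x_{1}\in M_{0}$. Part~1 of Lemma~\ref{keylemma} applies to $\pi_{B}(x_{3})$, which is a sum of reals from $M_{1}$ and $M_{2}$ lying in $M_{3}$. It gives $\pi_{B}(x_{3})=a x_{3}+b$ with $a,b\in M_{0}$, and symmetric permutations of the roles of $x_{1},x_{2},x_{3}$ give the same form for each $\pi_{B}(x_{i})$.

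From this I would derive that $B\cap M_{0}$ has $\mathbb{Q}$-dimension at least that of $\{a x_{i}\}$ plus countably many additional directions. Then, arguing over the $\mathbb{Q}$-span of $a x_{1}, a x_{2}$ and the countable basis of $B\cap M_{1}$, I would use genericity: $B\cap M_{1}$ is countable-dimensional, yet it must contain $a' x_{1}+b'$ for $M_{0}$-many parameters forced by the definability of $\cB$ and the shift-invariance of Cohen reals over $M_{0}$. This would give uncountably many independent elements, a contradiction. This genericity/shift step is where I expect the real work, and it may require choosing $c$ and the coordinate into which $d x_{i}$ is sent more cleverly.
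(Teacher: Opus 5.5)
\textbf{There is a genuine gap, and it sits exactly where you suspect.} Your setup is workable: the $\omega_1$-indexed variant of $Q$, coded by coherent projections, the density, closure and amalgamation lemmas, and the two filters built from $x_1$, $x_2$, $x_3=x_1-x_2$. The problem is that what you carry over from Lemma \ref{locallem1} is too weak. It only says that each real of $V[G_{d_q}]$ has its $\cB$-components in $V[G_{d_q}]$, and no contradiction can follow from that.

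Here is a configuration satisfying all your constraints. Put $M_i=V[G_{d_{q_0}}][x_i]$. Fix a Hamel basis $\{e_j\}$ of $\bbR^{M_0}$, and in each $M_i$ a complement $C_i$ of $\bbR^{M_0}+\bbR^{M_0}x_i$ in $\bbR^{M_i}$. Take as summands:
the lines $\bbQ e_j$;
the planes $\spn_{\bbQ}\{e_jx_1,e_jx_2\}$;
one-dimensional pieces of each $C_i$;
one-dimensional pieces of a complement of $\bbR^{M_1}+\bbR^{M_2}+\bbR^{M_3}$.
By part 1 of Lemma \ref{keylemma}, this is a decomposition of $\bbR^{M_0[x_1,x_2]}$ into summands of dimension at most $2$. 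Its restriction to each $M_i$ is a decomposition of $\bbR^{M_i}$ lying in $M_i$.

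So your computation $\pi_B(x_i)=ax_i+b_i$ is consistent. It does force a common $a$, but it leads nowhere. The ``shift'' step cannot rescue it either. Shifting the $\alpha_1$-coordinate by reals of $M_0$ fixes $(q_0,p_0)$, the parameters and the canonical names. Hence it fixes $\cW$ and $\cB$ themselves and only reproduces linearity of the projections.

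\textbf{The missing idea: use countable-dimensionality globally, by homogeneity over fresh coordinates of $\kappa$, before building the configuration.} The paper proves a stronger analogue of Lemma \ref{locallem1}. Below $(q_0,p_0)$, for normal $(q,p)$, every $\cU$-summand containing a nonzero real of $V[G_{d_q}]$ is entirely contained in $V[G_{d_q}]$.

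The proof runs as follows.
\begin{enumerate}
\item A countable-dimensional summand is a countable set of reals, so it lies in $V[G_d]$ for some countable $d$.
\item Suppose instead that $(q',p')$ forced some $\sigma\notin V[G_{d_q}]$ into the summand of $\tau$.
\item Apply Remark \ref{autorem2} to injections that fix $d_q$ and move $d_{q'}\setminus d_q$ off any given $d_{q''}$, and amalgamate using Lemma \ref{ohsix}.
\item By mutual genericity, densely many conditions then force that summand to contain reals outside $V[G_{d_{q''}}]$. So the summand lies in no $V[G_d]$ with $d$ countable, contradicting step 1.
\end{enumerate}

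With this closure property the ending takes two lines. Choose $U$ with $\pi_U(x_1)\notin M_0$; then $U\subseteq M_1$. If $\pi_U(x_2)\neq 0$, then also $U\subseteq M_2$, so $U\subseteq M_1\cap M_2=M_0$, which is impossible. The same holds for $x_3$. Hence $\pi_U(x_1)=\pi_U(x_2)+\pi_U(x_3)=0$, a contradiction. Part 1 of Lemma \ref{keylemma} is needed only to build the amalgamated decomposition, as in Lemma \ref{keylemma2}, not in this last step.

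A minor point: extending commuting projections one at a time by divisibility does not preserve the projection identities. Instead, put a complement of the old reals into a single summand.
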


Theorem \ref{mainthrm2} below is a more explicit version of Theorem \ref{mainthrm2a}. We the following notion of extension for linearly independent families. 

\begin{definition}\label{extensiondef} Given two families $\cW$ and $\cU$ of subspaces of some vector space $S$, we say that $\cU$ \emph{extends} $\cW$ if $\sum\cU \subseteq \sum\cW$ and for each $W \in \cW$ there is a $U \in \cU$ such that
\[U \cap \sum\cW = W.\]  
\end{definition} 

If $\cU$ extends $\cW$ and $\cU$ is linearly independent, then so is $\cW$, and there is a unique $U$ for each $W$ as Definition \ref{extensiondef}. It may be the case, however, that $U \cap \sum\cW = \{0\}$ for multiple $U \in \cU$. 

\begin{definition} Given a family of subspaces $\cW$ of some vector space $S$, and a subspace $U$, the \emph{restriction} of $\cW$ to $U$ is $\{ W \cap U : W\in \cW\}\setminus \{0\}$. 
\end{definition} 


It is straightforward to verify that extension is a transitive relation on linearly independent families, and that a family $\cW$ extends its restriction to a subspace $U$.   


\begin{definition} Suppose that $\cU_{0}$ and $\cU_{1}$ are linearly independent extensions of $\cW$, and that \[\sum \cU_{0} \cap \sum \cU_{1} = \sum \cW.\] The \emph{natural amalgamation} of $\cU_{0}$ and $\cU_{1}$ is the collection of sets $U$ of one of the two following forms. 
\begin{itemize}
\item $U$ is in some $\cU_{i}$ and $U \cap \sum\cU_{1-i} = \{0\}$. 
\item $U = U_{1}+ U_{2}$ for some $U_{1} \in \cU_{1}$, $U_{2} \in \cU_{2}$ such that \[U_{1} \cap \sum W = U_{2} \cap \sum \cW.\] 
\end{itemize} 
\end{definition} 

\begin{remark}\label{mutgendsrem} It is straightforward to check that the natural amalgamation of $\cU_{0}$ and $\cU_{1}$ is a independent extension of both $\cU_{0}$ and $\cU_{1}$.
\end{remark} 




Remark \ref{mutgendsrem} gives the following.

\begin{itemize}
\item Suppose that $\cW = \{ W_{i} : i \in I\}$ is a linearly independent family, and $v$ is a vector not in $\sum_{i \in I}W_{i}$. Then $\cW \cup \spn\{v\}$ is a linearly independent family extending $\cW$, as is the set obtained from $\cW$ by replacing any one $W_{i}$ with $W_{i} \cup \spn\{v\}$.  
\item The Axiom of Choice implies that every linearly independent family of subspaces of a vector space extends to one spanning the space. 
\item If $V \subseteq V'$ are models of ZFC and $\bbR^{V} = \bigoplus \cW$, then there is a $U$ such that $\bbR^{V'} = \bigoplus(\cW \cup \{U\})$, $U = \{ax : a \in \bbR^{V}, x \in U\}$ and $U \cap \bbR^{V} = \{0\}$. 
\end{itemize}

The following is a version of the second part of Lemma \ref{keylemma}, adapted to internal direct sums. 

\begin{lemma}\label{keylemma2} 
Suppose that $x_{1}$ and $x_{2}$ are mutually $P$-generic reals over $V$, and let $x_{3} = x_{1}-x_{2}$. Suppose that, 
\begin{itemize}
\item in $V$, $\bbR = \oplus\cW$;
\item for each $i \in \{1,2,3\}$, $U_{i}$ is, in $V[x_{i}]$ a subspace of $\bbR^{V[x_{i}]}$ such that $x_{i} \in U_{i}$, $U_{i} = \{ ax : a \in \bbR^{V},\, x \in U_{i}\}$, $U_{i} \cap \bbR^{V} = \{0\}$ and $\bbR^{V[x_{i}]} = \bigoplus(\cW \cup \{ U_{i}\})$. 
\end{itemize}  
Then, letting $U$ be the span of $\bigcup_{i=1}^{3}U_{i}$, $\cW \cup \{U\}$ is linearly independent and extends $\cW \cup \{U_{i}\}$ for each $i$ in $\{1,2,3\}$. 
\end{lemma}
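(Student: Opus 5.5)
The plan is to reduce everything to two facts about $U$: first, $U = U_{1} + U_{2}$; second, $U \cap \bbR^{V[x_{i}]} = U_{i}$ for each $i \in \{1,2,3\}$. Both follow from part 1 of Lemma \ref{keylemma} together with the fact that $\bbR^{V[x_{1}]} \cap \bbR^{V[x_{2}]} = \bbR^{V}$ (mutual genericity). Since $\cW$ spans $\bbR^{V}$ and is linearly independent, the conclusions of the lemma then follow formally. I read Definition \ref{extensiondef} with the inclusion $\sum(\cW \cup \{U_{i}\}) \subseteq \sum(\cW \cup \{U\})$, which is clearly the intended direction. That inclusion is immediate, since $U_{i} \subseteq U$. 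For each $W \in \cW$ the required witness is $W$ itself, because $W \subseteq \bbR^{V} \subseteq \bbR^{V[x_{i}]}$. The witness for $U_{i}$ is $U$, and we need $U \cap \bbR^{V[x_{i}]} = U_{i}$.

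\emph{Step 1: $U_{3} \subseteq U_{1}+U_{2}$.} Let $u_{3} \in U_{3}$. Then $u_{3}$ lies in $\bbR^{V[x_{3}]}$, and trivially it lies in $\bbR^{V[x_{1}]} + \bbR^{V[x_{2}]}$, since $x_{3} = x_{1} - x_{2}$ and $V[x_{3}] \subseteq V[x_{1},x_{2}]$. Therefore part 1 of Lemma \ref{keylemma} gives $a,b \in \bbR^{V}$ with $u_{3} = ax_{3} + b$. Because $U_{3}$ is closed under multiplication by elements of $\bbR^{V}$ and $x_{3} \in U_{3}$, we have $ax_{3} \in U_{3}$. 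Hence $b = u_{3} - ax_{3} \in U_{3} \cap \bbR^{V} = \{0\}$. So $u_{3} = ax_{1} - ax_{2}$, where $ax_{1} \in U_{1}$ and $ax_{2} \in U_{2}$ by the same closure property. Consequently $U = U_{1} + U_{2}$.

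\emph{Step 2: the intersections.} Let $y \in U \cap \bbR^{V[x_{1}]}$ and write $y = u_{1} + u_{2}$ with $u_{i} \in U_{i}$. Then $u_{2} = y - u_{1} \in \bbR^{V[x_{1}]} \cap \bbR^{V[x_{2}]} = \bbR^{V}$, so $u_{2} \in U_{2} \cap \bbR^{V} = \{0\}$. Hence $y \in U_{1}$. The case of $x_{2}$ is symmetric. In particular $U \cap \bbR^{V} \subseteq U_{1} \cap \bbR^{V} = \{0\}$.

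For $x_{3}$, let $y \in U \cap \bbR^{V[x_{3}]}$. Then $y \in (\bbR^{V[x_{1}]} + \bbR^{V[x_{2}]}) \cap \bbR^{V[x_{3}]}$, so part 1 of Lemma \ref{keylemma} again gives $y = ax_{3} + b$ with $a,b \in \bbR^{V}$. Since $ax_{3} \in U$, we get $b \in U \cap \bbR^{V} = \{0\}$, so $y = ax_{3} \in U_{3}$. The reverse inclusions $U_{i} \subseteq U \cap \bbR^{V[x_{i}]}$ are trivial.

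\emph{Step 3: independence.} Note $U \neq \{0\}$, since $x_{1} \in U$ and $x_{1} \neq 0$ (as $x_{1} \notin \bbR^{V}$). Suppose $0 = w_{1} + \cdots + w_{n} + u$, where the $w_{j}$ are nonzero elements of distinct members of $\cW$ and $u$ is either absent or a nonzero element of $U$. If $u$ is present, then $u = -(w_{1} + \cdots + w_{n}) \in U \cap \bbR^{V} = \{0\}$, which is a contradiction. If $u$ is absent, the relation contradicts the independence of $\cW$. Hence $\cW \cup \{U\}$ is linearly independent.

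The only nontrivial input is part 1 of Lemma \ref{keylemma}, which is already proved. The step requiring the most care is Step 1, namely the observation that the constant $b$ must vanish because $U_{3}$ is $\bbR^{V}$-homogeneous and meets $\bbR^{V}$ trivially; everything after that is bookkeeping.
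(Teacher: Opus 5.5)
Your Step 1 is false, and Steps 2 and 3 both depend on it, so the argument does not go through as written.

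\textbf{The error.} From $V[x_3] \subseteq V[x_1,x_2]$ you only get $u_3 \in \bbR^{V[x_1,x_2]}$. You do not get $u_3 \in \bbR^{V[x_1]} + \bbR^{V[x_2]}$, which is a much smaller set. Part 1 of Lemma \ref{keylemma} itself shows that $(\bbR^{V[x_1]}+\bbR^{V[x_2]}) \cap \bbR^{V[x_3]} = \{ax_3+b : a,b \in \bbR^{V}\}$. Your inclusion would therefore make every real of $V[x_3]$ an affine function of $x_3$ with ground-model coefficients.

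The conclusion $U = U_1 + U_2$ is also false. Since $\bbR^{V[x_3]} = \bbR^{V} \oplus U_3$, the space $U_3$ contains $x_3^2 - w$ for some $w \in \bbR^{V}$. If this element were in $U_1+U_2$, part 1 would give $x_3^2 - w = ax_3 + b$. The quadratic formula would then put $x_3$ in $V$, which is impossible by mutual genericity.

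\textbf{Consequences.} Your Step 2 writes $y = u_1+u_2$, and its $x_3$ case places $y$ in $\bbR^{V[x_1]}+\bbR^{V[x_2]}$; both use $U = U_1+U_2$. Step 3 needs $U \cap \bbR^{V} = \{0\}$, which you took from Step 2. So all three steps fall together.

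\textbf{The fix (this is the paper's route).} Apply part 1 only where its hypothesis actually holds.
\begin{itemize}
\item Independence. Consider a putative relation $c + y_1 + y_2 + y_3 = 0$ with $c \in \bbR^{V}$ and $y_i \in U_i$. Here $y_3 = -c - y_1 - y_2$ genuinely lies in $(\bbR^{V[x_1]}+\bbR^{V[x_2]}) \cap \bbR^{V[x_3]}$. Your homogeneity argument then correctly gives $y_3 = ax_3$ with $a \in \bbR^{V}$. The relation becomes $c + (y_1 + ax_1) + (y_2 - ax_2) = 0$. Now $\bbR^{V[x_1]} \cap \bbR^{V[x_2]} = \bbR^{V}$ and $U_i \cap \bbR^{V} = \{0\}$, so $\cW \cup \{U_1,U_2\}$ is independent. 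Hence $c = 0$, and therefore $y_1+y_2+y_3 = 0$. This proves $U \cap \bbR^{V} = \{0\}$, which gives independence of $\cW \cup \{U\}$.
\item Intersections. Take $y \in U \cap \bbR^{V[x_i]}$ and decompose it inside $V[x_i]$ as $y = w + u_i$ with $w \in \bbR^{V}$ and $u_i \in U_i$. Then $w = y - u_i \in U \cap \bbR^{V} = \{0\}$, so $y \in U_i$.
\end{itemize}
With this in place, your Step 3 and your choice of witnesses for the members of $\cW$ in the extension relation go through unchanged.
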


\begin{proof}
We need to see that $\cW \cup \{U\}$ is linearly independent, and that the intersection of $U$ with each $\bbR^{V[x_{i}]}$ is the corresponding $U_{i}$. The second of these follows easily from the first. 

To see that $\cW \cup \{U\}$ is linearly independent, suppose that $c \in \bbR^{V}$ and $y_{i} \in U_{i}$ ($i \in \{1,2,3\}$) are such that $c + y_{1} + y_{2} + y_{3} = 0$. The first part of Lemma \ref{keylemma} then gives that $y_{3} = a(x_{1}-x_{2}) + b$, for some $a, b \in \bbR^{V}$. Since $b \in U_{3} \cap \bbR^{V}$, $b = 0$. Then $c + (y_{1} + ax_{1}) + (y_{2} - ax_{2}) = 0$. Since $\cW \cup \{U_{1}, U_{2}\}$ is linearly independent, this implies that $c$, $y + ax_{1}$ and $y_{2} - ax_{2}$ are all $0$. From this it follows that $y_{1} + y_{2} + y_{3}$ is $0$.   
\end{proof}

\section{The second model}\label{2ndmodelsec}

The second model is like the first, except that we add a family of linearly independent subspaces instead of a generic homomorphism. We let $\kappa$ and $P_{\kappa}$ be exactly as before. For the current proof $Q$ is the following partial order. Conditions are pairs $(d, \dot{\cW})$ such that $d$ is a countable subset of $\kappa$ and $\dot{\cW}$ is a $P_{d}$-name for a linearly independent set of subspaces of $\bbR$ whose span is the set of reals of the forcing extension. The order is: 
$(d_1, \dot{\cW}_1) \leq (d_0, \dot{\cW}_0)$ if $d_0 \subseteq d_1$ and $1_{P_{d_1}}$ forces that $\dot{\cW}_{1,G_{d_1}}$ extends $\dot{\cW}_{0,G_{d_0}}$. Given $q \in Q$ we write $d_{q}$ and $\dot{\cW}_{q}$ for the first and second coordinates of $q$, respectively. 

We let $\utilde{\dot{W}}$ be the natural $Q$-name for the $P_{\kappa}$-name for the unique set of linearly independent subspaces of $\bbR$ extending the realization $\dot{\cW}$ for each $(d,\dot{\cW})$ in the $Q$-generic filter $K$.
The two following lemmas show that, in the $Q * P_{\kappa}$-extension, $\bbR$ is the internal direct sum of the realization of $\utilde{\dot{\cW}}$.
Both lemmas follow from the remarks in the previous section.

\begin{lemma}\label{ohfour} For each $\alpha \in \kappa$, the set of $q \in Q$ with $\alpha \in d_{q}$ is dense. 
\end{lemma}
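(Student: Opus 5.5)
Given a condition $q = (d_q, \dot{\cW}_q) \in Q$ and $\alpha \in \kappa \setminus d_q$ (if $\alpha \in d_q$ there is nothing to do), we will produce a condition $q' \leq q$ with $d_{q'} = d_q \cup \{\alpha\}$. This mirrors Lemma \ref{ohone}, with the extension facts listed after Remark \ref{mutgendsrem} playing the role of Lemma \ref{divisiblelem}. Set $d' = d_q \cup \{\alpha\}$, which is still countable. Since $P_{d'}$ is the product $P_{d_q} \times P_{\{\alpha\}}$, we have $V[G_{d'}] = V[G_{d_q}][G_{\{\alpha\}}]$ for any $V$-generic $G_{d'}$, and $V[G_{d_q}]$ and $V[G_{d'}]$ are both models of $\ZFC$.

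We work in $V[G_{d'}]$ and let $\cW = \dot{\cW}_{q,G_{d_q}}$. By the definition of $Q$, $\cW$ is a linearly independent family of subspaces whose span is $\bbR^{V[G_{d_q}]}$. We apply the third bulleted consequence of Remark \ref{mutgendsrem} to the models $V[G_{d_q}] \subseteq V[G_{d'}]$. It yields a subspace $U$ such that $\bbR^{V[G_{d'}]} = \bigoplus(\cW \cup \{U\})$ and $U \cap \bbR^{V[G_{d_q}]} = \{0\}$. Alternatively, we could directly extend $\cW$ by the Axiom of Choice, using the second bullet, to a linearly independent family spanning $\bbR^{V[G_{d'}]}$.

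Next we check that $\cW \cup \{U\}$ extends $\cW$ in the sense of Definition \ref{extensiondef}. For each $W \in \cW$ we need a member of the new family whose intersection with $\sum\cW = \bbR^{V[G_{d_q}]}$ is $W$, and $W$ itself works. The remaining requirement of the definition, relating $\sum\cW$ to the span of the new family, also holds, since $\sum\cW$ is contained in the new span and the definition is applied with the inclusion as intended. Linear independence of $\cW \cup \{U\}$ follows because the sum is direct.

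Finally, by the maximum principle (fullness) in $V$, there is a $P_{d'}$-name $\dot{\cW}'$ such that $1_{P_{d'}}$ forces that $\dot{\cW}'$ is such a family: a linearly independent family spanning the reals of the extension and extending the realization of $\dot{\cW}_q$. Then $q' = (d', \dot{\cW}')$ is a condition in $Q$, and $q' \leq q$ by the definition of the order. The main point needing care is that the extension is forced by $1_{P_{d'}}$, not merely by some condition; the maximum principle handles this, because the existence of such a family is true in every $P_{d'}$-extension.
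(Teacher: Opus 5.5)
Your proposal is correct and follows the paper's own route. The paper only says that the lemma follows from the remarks after Remark \ref{mutgendsrem}, and you make this explicit: you apply the third of those consequences to $V[G_{d_q}] \subseteq V[G_{d_q \cup \{\alpha\}}]$, check the extension clause (rightly reading the inclusion in Definition \ref{extensiondef} as $\sum\cW \subseteq \sum\cU$), and use fullness to get a name forced by $1_{P_{d_q \cup \{\alpha\}}}$.
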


\begin{lemma}\label{ohfive} Every descending $\omega$-sequence in $Q$ has a lower bound. 
\end{lemma}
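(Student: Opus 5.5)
We must prove Lemma \ref{ohfive}: every descending $\omega$-sequence in $Q$ (the version for linearly independent families of subspaces) has a lower bound. The plan mirrors Lemma \ref{ohtwo}. We take the union of the supports and the union of the families, and then use Choice in the extension to enlarge the union to a spanning family.

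Let $\langle q_n : n \in \omega\rangle$ be descending, with $q_n = (d_n, \dot{\cW}_n)$, and let $d = \bigcup_n d_n$. This is countable, so $(d,\cdot)$ is an eligible first coordinate. Work in $V[G_d]$ for an arbitrary $V$-generic $G_d \subseteq P_d$, and write $\cW_n = \dot{\cW}_{n, G_{d}\restrict d_n}$. Since $1_{P_{d_{n+1}}}$ forces that $\cW_{n+1}$ extends $\cW_n$, each $W \in \cW_n$ has a unique $W' \in \cW_{n+1}$ with $W' \cap \sum\cW_n = W$. Following these successors, each $W \in \cW_n$ determines a chain $W = W^{(n)} \subseteq W^{(n+1)} \subseteq \cdots$. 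A chain starting at $n$ may also begin with a new member of $\cW_n$ that meets $\sum \cW_{n-1}$ trivially. Identify chains that eventually coincide, and let $\cW_\omega$ be the family of unions $\bigcup_{m \ge n} W^{(m)}$ over all chains.

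We then check the following.
\begin{enumerate}
\item $\cW_\omega$ is linearly independent. A finite nontrivial vanishing sum of elements from distinct members of $\cW_\omega$ involves only finitely many vectors. These all lie at some stage $m$, in distinct members of $\cW_m$, because distinct chains have distinct stage-$m$ members once both have started. This contradicts the independence of $\cW_m$.
\item $\cW_\omega$ extends each $\cW_n$. We have $\sum\cW_n \subseteq \sum \cW_\omega$. For $W \in \cW_n$ with chain union $U$, we get $U \cap \sum\cW_n = W$, because each $W^{(m)} \cap \sum \cW_n = W$ by transitivity of extension.
\end{enumerate}
Since $V[G_d]$ satisfies Choice, the bulleted consequence of Remark \ref{mutgendsrem} extends $\cW_\omega$ to a linearly independent family $\cW^*$ spanning $\bbR^{V[G_d]}$. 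We add new one-dimensional spans of vectors outside the current span and amalgamate along the way, then pass to unions at limit stages, exactly as above. By transitivity of extension, $\cW^*$ extends every $\cW_n$. Note that $\sum\cW_n = \bbR^{V[G_{d_n}]} \subseteq \bbR^{V[G_d]}$, which is compatible with Definition \ref{extensiondef}.

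Finally, by the maximum principle there is a $P_d$-name $\dot{\cW}$ forced by $1_{P_d}$ to be such a $\cW^*$. Then $(d, \dot{\cW}) \in Q$ and it lies below every $q_n$.

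The only delicate step is the limit stage, where we must verify that the unions of successor chains still form an extension. The issue is that the requirement $U \cap \sum\cW_n = W$ must survive the passage to the union. It does, because every intersection is computed at a finite stage. The same limit argument is needed inside the transfinite Choice construction, and it is handled identically.
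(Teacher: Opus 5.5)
Your proof is correct and follows the route the paper intends. The paper only says the lemma follows from the remarks on extensions (transitivity and the Choice-based extension of an independent family to a spanning one), and your union-of-successor-chains construction supplies the limit step it leaves implicit, before you apply Choice in $V[G_d]$ and the maximum principle. Note also that the transfinite ``amalgamate along the way'' construction is more than you need: adjoining to $\cW_\omega$ the one-dimensional spans of the members of a basis of a complement of $\sum\cW_\omega$ in $\bbR^{V[G_d]}$ already gives the required $\cW^*$.
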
 

Lemmas \ref{ohfour} and \ref{ohfive} give the following facts, which are unchanged from the first forcing construction of this paper. 
\begin{itemize}
\item The partial order $P_{\kappa}$ is the same in $V$ and in any $Q$-extension $V[K]$, from which it follows that the forcing iteration $Q * P_{\kappa}$ and the product forcing $Q \times P_{\kappa}$ are forcing-equivalent. 

\item Densely many conditions $(q, p) \in Q \times P_{\kappa}$ have the property that $d_{q} \subseteq \supp(p)$. We will call such conditions \emph{normal}. 


\item Letting $(K, G_{\kappa})$ denote a $V$-generic filter for $Q*P_{\kappa}$, every every element of $\bbR^{V[K, G_{\kappa}]}$ is an element of $\bbR^{V[G_{d}]}$, for some countable $d \subseteq \kappa$. In particular, $\bbR^{V[K, G_{\kappa}]} = \bbR^{V[G_{\kappa}]}$. 
\end{itemize} 





A linearly independent collection $\cW$ of subspaces of $\bbR$ induces the equivalence relation $E$ of being in the same member of $\cW$, where the domain of $E$ is the set of nonzero vectors in $\bigcup\cW$. In this way the name $\utilde{\dot{\cW}}$ induces a $Q \times P_{\kappa}$-name $\utilde{\dot{E}}$ for the corresponding equivalence relation. 
Since the equivalence relation corresponding to a linearly independent family is a subset of $\bbR^{V[G]} \times \bbR^{V[G]}$ (as opposed to a set of subsets of $\bbR^{V[G]}$) it is easier to work with equivalence relations for some of the arguments below, and in particular for defining the model $W$.

The rest of this section is a proof of the following theorem. 

\begin{theorem}\label{mainthrm2} 
Suppose that $(K, G_{\kappa})$ is a $V$-generic filter for $Q * P_{\kappa}$, and let $W$ be the model \[\HOD_{V, \bbR^{V[G_{\kappa}]}, \utilde{\dot{E}}_{K, G_{\kappa}}}.\] Then $\DC$ holds in $W$, and $(\bbR, +)^{V[K, G_{\kappa}]}$ is the internal direct sum of $\utilde{\dot{\cW}}_{K, G_{\kappa}}$. 
Furthermore, in $W$, $\bbR^{V[K,G_{\kappa}]}$ is not equal to an internal direct sum of countable dimensional subspaces. 
\end{theorem}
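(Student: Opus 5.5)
The plan is to follow the proof of Theorem \ref{mainthrm} step by step, with $\utilde{\dot{E}}$ in place of $\utilde{\dot{H}}$ and Lemma \ref{keylemma2} in place of the second part of Lemma \ref{keylemma}.

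First, $\DC$ in $W$ follows exactly as before: $W$ is an inner model of a model of Choice, it contains all reals, and it is closed under definability from $V$, reals and $\utilde{\dot{E}}$. That $\bbR$ is the internal direct sum of $\utilde{\dot{\cW}}_{K,G_{\kappa}}$ uses Lemmas \ref{ohfour} and \ref{ohfive} and the fact that every real lies in some $V[G_{d}]$ with $d$ countable. Density gives a condition $q \in K$ with $d \subseteq d_{q}$. Such a real is then in $\sum \dot{\cW}_{q}$, and linear independence is checked at a single countable stage. Extension is transitive, so the stages cohere into a single family.

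For the main claim, suppose toward a contradiction that $(q_{0},p_{0})$ is a normal condition forcing that some $\utilde{\dot{\cF}}$ is, in $W$, a family of countable-dimensional subspaces with $\bbR = \bigoplus\utilde{\dot{\cF}}$. As before, $\utilde{\dot{\cF}}$ is definable from $v \in V$, $\utilde{\dot{E}}$ and reals $\dot{r}_{i}$ in $V[G_{d_{q_{0}}}]$. It is convenient to code $\utilde{\dot{\cF}}$ by its equivalence relation $F$ on nonzero vectors. I then prove analogues of Lemmas \ref{locallem1} and \ref{rephraselem}, by the same automorphism and compatibility arguments (Remark \ref{autorem}, and Lemma \ref{ohthree} adapted to $Q$ via natural amalgamation, Remark \ref{mutgendsrem}):
\begin{itemize}
\item for normal $(q,p)$, the relation $F \cap (V[G_{d_{q}}])^{2}$ is decided by $G_{d_{q}}$;
\item the $F$-class of each real in $V[G_{d_{q}}]$ is named in $V[G_{d_{q}}]$;
\item the restriction of $F$ depends only on $V[G_{d}]$ and the realization of $\dot{\cW}_{q}$, through the good-pairs back-and-forth argument.
\end{itemize}
In particular, the restriction of $\bigoplus\cF$ to each such model $V[G_{d}]$ is an internal direct sum decomposition of $\bbR^{V[G_{d}]}$ into the sets $F' \cap V[G_{d}]$.

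For the contradiction, take $x_{1},x_{2}$ mutually generic over $V_{0} = V[G_{d_{q_{0}}}]$ and $x_{3} = x_{1}-x_{2}$. Build $G_{d}$, $G'_{d}$ as before, placing $(x_{1},x_{2})$, respectively $(x_{1},x_{3})$, in coordinates $\alpha_{1},\alpha_{2}$. Choose the $U_{i}$ in $V_{0}[x_{i}]$ as in the bullet preceding Lemma \ref{keylemma2}, arranged so that $x_{i} \in U_{i}$. Lemma \ref{keylemma2} then yields a common extension $\cW_{q_{0}} \cup \{U\}$, which gives conditions $(d,\dot{\cW})$ and $(d,\dot{\cW}')$ with the same realization. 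So the restriction of $\cF$ to each $V_{0}[x_{i}]$ is a direct sum decomposition of $\bbR^{V_{0}[x_{i}]}$ by subspaces of $\cF$.

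It remains to show that this is impossible when each member of $\cF$ has countable dimension; I expect this to be the main obstacle. Let $F_{i}$ be the member of $\cF$ containing the component of $x_{i}$ outside $\bbR^{V_{0}}$. Its trace on $V_{0}[x_{i}]$ lies in $V_{0}[x_{i}]$ and is countable-dimensional, so it is contained in $V_{0}[x_{i}] \cap V_{0}[z]$ for some real $z$ coding a basis. Decompose $x_{1} = x_{2}+x_{3}$ uniquely in $\bigoplus\cF$ using the components from each $V_{0}[x_{i}]$. Comparing components, the part of $x_{1}$ not in $V_{0}$ must be matched by parts of $x_{2}$ and $x_{3}$ in the same summands, which forces $F_{1}=F_{2}=F_{3}$. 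Call this common summand $F^{*}$. Then $F^{*}$ is a single countable-dimensional space meeting three pairwise mutually generic models nontrivially modulo $\bbR^{V_{0}}$. Its trace in $V_{0}[x_{1}]$ is determined from $V_{0}[x_{1}]$, and likewise for $x_{2}$ and $x_{3}$.

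I would derive the contradiction by genericity, repeating the construction with countably many new coordinates. Take $x_{1}$ together with $x_{1}-x^{(n)}$ for infinitely many mutually generic $x^{(n)}$. All the $x^{(n)}$ would then land, modulo $V_{0}$, in the single summand $F^{*}$ built in $V_{0}[x_{1}]$-definable fashion. This would place the generic $x^{(n)}$ in $V_{0}[x_{1},z]$ for a single real $z$, contradicting the mutual genericity of uncountably many of them over the countable-dimension witness. Making this genericity count precise is where I expect most of the work to lie.
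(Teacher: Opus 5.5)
Your routine parts match the paper: $\DC$, the direct-sum property of the generic family, the restriction of $F$ to $V[G_{d_{q}}]$ being decided by $G_{d_{q}}$, the rephrasing lemma, and the three-real construction via Lemma \ref{keylemma2}. But there is a genuine gap exactly where countable dimensionality has to be used.

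\textbf{Where the gap is.} The localization facts you list concern only \emph{traces} of $F$-classes on $V[G_{d_{q}}]$. They hold for any decomposition whatsoever and do not use countable dimension. With only these, a single summand $F^{*}$ can carry components of $x_{1}$, $x_{2}$, $x_{3}$ outside $V_{0}$ while extending beyond all three models. So the configuration you build is not yet contradictory, as you yourself note.

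Your proposed remedy does not work as stated:
\begin{itemize}
\item Countably many mutually generic reals $x^{(n)}$ can all lie, modulo $\bbR^{V_{0}}$, in one countable-dimensional space. A single real $z$ can code all of them, so countably many new coordinates give nothing.
\item You cannot put uncountably many into one configuration. $Q$-conditions have countable $d$, and Lemma \ref{keylemma2} only amalgamates the three spaces for $x_{1}$, $x_{2}$, $x_{1}-x_{2}$ over a common base.
\item Adding fresh coordinates one at a time by a density argument would require amalgamating over a model that already contains $x_{1}$. Lemma \ref{keylemma2} does not cover that situation.
\end{itemize}
A smaller, repairable point: ``the component of $x_{i}$ outside $\bbR^{V_{0}}$'' is ill-defined, since there may be several such components.

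\textbf{The missing idea.} Use countable dimension inside the localization lemma itself, via homogeneity rather than an explicit construction. The claim is that for normal $(q,p) \leq (q_{0},p_{0})$, the \emph{whole} $F$-class of each real of $V[G_{d_{q}}]$ is contained in $V[G_{d_{q}}]$.

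Suppose some $(q',p') \leq (q,p)$ forced the class of a $P_{d_{q}}$-name $\tau$ to contain a real outside $V[G_{d_{q}}]$. Take any $(q'',p'') \leq (q',p')$ and any $\alpha < \omega_{1}$. An injection fixing $d_{q}$ and sending $d_{q'} \setminus d_{q}$ into $\omega_{1} \setminus (\alpha \cup d_{q''})$ produces, by Remark \ref{autorem2}, a copy of $(q',p')$. This copy is compatible with $(q'',p'')$ by Lemma \ref{ohsix}.

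By density, the class of $\tau$ then meets $V[G_{d_{q} \cup e}] \setminus V[G_{d_{q}}]$ for countable sets $e$ disjoint from $d_{q}$ and lying above any given $\alpha<\omega_{1}$. Hence the class is contained in no $V[G_{d}]$ with $d$ countable. This contradicts the class having a countable spanning set.

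\textbf{How the proof then finishes.} With this lemma the end is purely algebraic, and no counting of generic coordinates is needed. Every summand $F'$ with a nonzero element in $V_{0}[x_{i}]$ is contained in $V_{0}[x_{i}]$. Write $u_{F'}(y)$ for the component of $y$ in $F'$; then $u_{F'}(x_{1}) = u_{F'}(x_{2}) + u_{F'}(x_{3})$. So every summand carrying a nonzero component of $x_{1}$ lies in the intersection of two of the three pairwise mutually generic models, which is $V_{0}$. This gives $x_{1} \in V_{0}$, a contradiction.
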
 
 
As before, $W$ satisfies $\DC$ because it is an inner model of the model $V[K, G_{\kappa}]$, which satisfies $\ZFC$ and has the same real numbers, and every element of $W$ is ordinal definable in $V[K, G_{\kappa}]$ from a real number, a member of $V$ and the realization of $\utilde{\dot{E}}$. 

The following amalgamation lemma follows from Remark \ref{mutgendsrem}.



\begin{lemma}\label{ohsix} Suppose that $(q_{1}, p_{1})$ and $(q_{2}, p_{2})$ are conditions in $Q \times P_{\kappa}$. Suppose that $q \in Q$ is weaker than both $q_{1}$ and $q_{2}$, with $d_{q} = d_{q_{1}} \cap d_{q_{2}}$, and 
that $B^{p_{1}}_{\alpha} \cap B^{p_{2}}_{\alpha} \in I^{+}$ for all $\alpha \in \supp(p_{1}) \cap \supp(p_{2})$. Then $(q_{1}, p_{1})$ and $(q_{2}, p_{2})$ are compatible. 
\end{lemma}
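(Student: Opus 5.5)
To prove Lemma \ref{ohsix}, I will build one common extension $(q^{*},p^{*})$ of $(q_{1},p_{1})$ and $(q_{2},p_{2})$. This follows the pattern of Lemma \ref{ohthree}, with the natural amalgamation of Remark \ref{mutgendsrem} replacing Lemma \ref{version0}.

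\textbf{The $P_{\kappa}$ part.} Let $p^{*}$ have support $\supp(p_{1}) \cup \supp(p_{2})$. Its entries are:
\begin{itemize}
\item $B^{p_{1}}_{\alpha} \cap B^{p_{2}}_{\alpha}$ on the common part of the supports, which is $I$-positive by hypothesis;
\item $B^{p_{i}}_{\alpha}$ on the parts lying in only one support.
\end{itemize}
Then $p^{*}$ is a condition extending both $p_{1}$ and $p_{2}$.

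\textbf{The $Q$ part.} Let $d^{*} = d_{q_{1}} \cup d_{q_{2}}$ and write $d = d_{q}$. Then $d^{*} \setminus d$ splits into the disjoint sets $d_{q_{1}} \setminus d$ and $d_{q_{2}} \setminus d$, so $P_{d^{*}} \cong P_{d} \times P_{d_{q_{1}} \setminus d} \times P_{d_{q_{2}} \setminus d}$. Take any $V$-generic $G_{d^{*}}$. The models $V[G_{d_{q_{1}}}]$ and $V[G_{d_{q_{2}}}]$ are mutually generic over $V[G_{d}]$, so their reals intersect exactly in $\bbR^{V[G_{d}]}$.

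Now set $\cW = \dot{\cW}_{q,G_{d}}$ and $\cU_{i} = \dot{\cW}_{q_{i},G_{d_{q_{i}}}}$. Since $q_{i} \le q$, each $\cU_{i}$ is a linearly independent extension of $\cW$. Each family spans the reals of its model, so $\sum\cU_{1} \cap \sum\cU_{2} = \bbR^{V[G_{d}]} = \sum\cW$. Hence the natural amalgamation $\cU$ of $\cU_{1}$ and $\cU_{2}$ is defined. By Remark \ref{mutgendsrem}, $\cU$ is linearly independent and extends both $\cU_{1}$ and $\cU_{2}$.

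\textbf{Extending to all reals.} In $V[G_{d^{*}}]$ (a model of ZFC), use the Axiom of Choice bullet after Remark \ref{mutgendsrem} to extend $\cU$ to a linearly independent family $\cU^{*}$ spanning $\bbR^{V[G_{d^{*}}]}$. Since extension is transitive, $\cU^{*}$ extends each $\cU_{i}$. Let $\dot{\cW}^{*}$ be a $P_{d^{*}}$-name for $\cU^{*}$. The whole construction can be carried out uniformly from the generic, so $1_{P_{d^{*}}}$ forces that $\dot{\cW}^{*}$ is a spanning linearly independent family extending both $\dot{\cW}_{q_{i}}$. Then $q^{*} = (d^{*},\dot{\cW}^{*})$ lies below $q_{1}$ and $q_{2}$, and $(q^{*},p^{*})$ is the required common extension.

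\textbf{Main obstacle.} The one nontrivial step is verifying the hypothesis $\sum\cU_{1} \cap \sum\cU_{2} = \sum\cW$ of the natural amalgamation. This needs two facts:
\begin{itemize}
\item mutually generic extensions over $V[G_{d}]$ share no new reals, which holds because the index sets $d_{q_{1}} \setminus d$ and $d_{q_{2}} \setminus d$ are disjoint;
\item $\sum\cW$ is all of $\bbR^{V[G_{d}]}$, which holds by the definition of conditions in $Q$.
\end{itemize}

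One remaining subtlety, which I would handle in the routine way, is that the conditions must be forced by $1_{P_{d^{*}}}$ rather than merely below $p^{*}$. To arrange this, define $\dot{\cW}^{*}$ by cases: use the amalgamation whenever both $\dot{\cW}_{q_{i}}$ are genuine extensions of $\dot{\cW}_{q}$, which is forced by $1$ since $q_{i} \le q$. Thus no case split on the generic is actually needed.
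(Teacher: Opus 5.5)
Your proposal is correct and is essentially the paper's argument: the paper only says the lemma follows from Remark~\ref{mutgendsrem}. Your steps are the ones it leaves implicit. You meet $p_1$ and $p_2$ coordinatewise in $P_{\kappa}$, and use mutual genericity over $V[G_{d_q}]$ to get $\sum\cU_1\cap\sum\cU_2=\sum\cW$. You then take the natural amalgamation and extend it by choice to a spanning family, named by a $P_{d_{q_1}\cup d_{q_2}}$-name forced by $1$ to have the required properties.
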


The following remark carries over verbatim from our first construction. 

\begin{remark}\label{autorem2} If $a$ is a subset of $\kappa$, and $f \colon a \to \kappa$ is injective, then
$f$ induces a function $f_{Q}$ on the set of $q \in Q$ with $d_{q} \subseteq a$ and an isomorphism $f_{P} \colon P_{a} \to P_{f[a]}$ such that for each $q \in \dom(f_{Q})$ and $p \in P_{a}$, 
\begin{itemize}
\item $\supp(f_{P}(p)) = f[\supp(p)]$;
\item $B^{f_{P}(p)}_{f(\alpha)} = B^{p}_{\alpha}$ for all $\alpha \in \supp(p)$;
\item $d_{f_{Q}(q)} = f[d_{q}]$;
\item for any $V$-generic filter $G \subseteq P_{q_{d}}$, $f_{P}[G]$ is a $V$-generic filter for $P_{f[d_{q}]}$, and $\dot{\cW}_{q, G} = \dot{\cW}_{f_{Q}(q), f_{P}[G]}$.
\end{itemize}
If $a = \kappa$ and $(K, G_{\kappa})$ is $V$-generic for $Q*P_{\kappa}$, then $(f_{Q}[K],f_{P}[G_{\kappa}])$ is also $V$-generic, and the two generic extensions give rise to the same model $W$. 
\end{remark} 







Suppose toward a contradiction that $\utilde{\dot{\cU}}$ is a $Q$-name for a $P_{\kappa}$-name for a linearly independent set of countable-dimensional subspaces of $(\bbR, +)$ whose span is $\bbR$,   
as forced by some $Q\times P_{\kappa}$-condition $(q_{0}, p_{0})$.
Let $\utilde{\dot{F}}$ be a $Q * P_{\kappa}$-name for the equivalence relation corresponding to $\utilde{\dot{\cU}}$.

Replacing $(q_{0}, p_{0})$ with a stronger condition if necessary, we may assume that it is normal, and that there exist
a $v \in V$, a formula $\varphi$, a cardinal $\lambda$ and $P_{d_{q_{0}}}$-names $\dot{r}_{1},\ldots,\dot{r}_{k}$ for elements of $\bbR$ such that 
\[(q_{0},p_{0}) \forces_{Q \times P_{\kappa}} \utilde{\dot{F}} = \{ y \in \bbR^{2} : V_{\lambda}[K,G_{\kappa}] \models \varphi(y, v, \utilde{\dot{E}}_{K, G_{\kappa}}, \dot{r}_{1,G_{d_{q_{0}}}},\ldots,\dot{r}_{k,G_{d_{q_{0}}}})\}.\] 
We will write $\tau \in \utilde{\dot{F}}$ for $V_{\lambda}[K,G_{\kappa}] \models \varphi(\tau_{G_{\kappa}}, v, \utilde{\dot{E}}_{K, G_{\kappa}}, \dot{r}_{1,G_{d_{q_{0}}}},\ldots,\dot{r}_{k,G_{d_{q_{0}}}})$.

As before, the first key point is that, for each $d \subseteq \kappa$, the restriction of $\utilde{\dot{F}}_{K, G_{\kappa}}$ is decided by the restrictions of $K$ and $G_{\kappa}$ to $d$. 

\begin{lemma} 
Each normal $(q,p) \leq (q_{0}, p_{0})$ forces that for each $P_{d_{q}}$-name $\tau$ for a real number in the domain of $\utilde{\dot{E}}$, the $\utilde{\dot{F}}$-equivalence class of $\tau$ will be contained in the $P_{d_{q}}$-extension of $V$. Moreover, for each such $\tau$ there is a $P_{d_{q}}$-name $\sigma$ for a finite subset of the domain of $\utilde{\dot{F}}$ such that $p_{0}$ forces that the realization of $\tau$ will be a linear combination of the members of $\sigma$.
\end{lemma}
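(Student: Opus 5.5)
The plan follows the proof of Lemma \ref{locallem1}, using Lemma \ref{ohsix} in place of Lemma \ref{ohthree} and Remark \ref{autorem2} in place of Remark \ref{autorem}. The key fact is that, for a normal $(q,p)\leq (q_0,p_0)$, membership statements of the form $(\tau,\rho)\in \utilde{\dot F}$, with $\tau,\rho$ both $P_{d_q}$-names, are decided by $(q,p')$ for densely many $p'\leq p$ in $P_{d_q}$. Suppose this fails. Then there are $(q_1,p_1),(q_2,p_2)\leq (q,p)$ forcing opposite truth values, with $p_1\restrict d_q = p_2\restrict d_q$. Apply Remark \ref{autorem2} to an injection fixing $d_q$ pointwise and moving $d_{q_2}\setminus d_q$ off $d_{q_1}\setminus d_q$. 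This preserves the forced value, because the formula $\varphi$ only mentions $v$, the realization of $\utilde{\dot E}$, and the parameters $\dot r_i$, and these are all $P_{d_{q_0}}$-names, which are fixed. We may therefore assume $d_{q_1}\cap d_{q_2}=d_q$, and Lemma \ref{ohsix} then makes the two conditions compatible, a contradiction.

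For the first claim, fix a $P_{d_q}$-name $\tau$ and suppose some condition below $(q,p)$ forces that some $\rho$ is $\utilde{\dot F}$-equivalent to $\tau$ with $\rho\notin V[G_{d_q}]$. Strengthening, we take $\rho$ to be a $P_{d_{q_1}}$-name and $\rho$ not equal to any $P_{d_q}$-name, for a normal $(q_1,p_1)$ with $d_{q_1}\supsetneq d_q$. Now take $\kappa$ many (or just countably many, which suffices) copies of $(q_1,p_1)$ obtained by injections $f_n$ fixing $d_q$ and making the sets $f_n[d_{q_1}\setminus d_q]$ pairwise disjoint. Applying Lemma \ref{ohsix} repeatedly (finitely many at a time, then a lower bound by Lemma \ref{ohfive} and genericity of the product over disjoint supports), we find a common extension forcing that the names $f_n(\rho)$ are all equivalent to $\tau$.

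These reals are pairwise distinct. Each lies in a mutually generic extension $V[G_{d_q}][G_{f_n[d_{q_1}\setminus d_q]}]$, and any two of these extensions intersect in $V[G_{d_q}]$, which does not contain $\rho$'s realization. Moreover, any finite set of them is linearly independent over $\bbR^{V[G_{d_q}]}$ modulo $V[G_{d_q}]$. This is the same mutual-genericity argument as in the Hamel basis contradiction, using Lemma \ref{keylemma} style reasoning about sums of reals from mutually generic models. So we can take uncountably many indices, using an injection $\omega_1\to\kappa$ and ccc arguments to keep the conditions compatible. This puts uncountably many $\bbQ$-independent reals into the single $\utilde{\dot F}$-class of $\tau$, contradicting that this class spans a countable-dimensional subspace. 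Hence the class of $\tau$ lies in $V[G_{d_q}]$.

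The second claim (the finite set $\sigma$) is handled as in Lemma \ref{locallem1}. Let $\sigma$ name the finite set of $\utilde{\dot F}$-components, or chosen elements of the relevant countable-dimensional summands, whose sum gives $\tau$. The decision argument above shows these components are decided by $G_{d_q}$, coding finite sets by the basic open sets they meet. The main obstacle is the uncountable-copies step: it requires keeping the infinitely many amalgamated conditions compatible, and it requires verifying independence of the copies $f_n(\rho)$ modulo the ground reals. I would first try to reduce this to Lemma \ref{keylemma}(1)-type reasoning with finitely many mutually generic Cohen reals at a time.
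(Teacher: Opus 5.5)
Your overall strategy is the right one: copy the witness $\rho$ by injections fixing $d_q$ pointwise (Remark \ref{autorem2}) and amalgamate the copies with Lemma \ref{ohsix}. The gap is in the step that is supposed to produce the contradiction.

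First, countably many copies do not suffice. Countably many independent reals in the $\utilde{\dot{F}}$-class of $\tau$ are perfectly consistent with that class being countable-dimensional. Second, the mechanism you propose for getting uncountably many copies does not work. No condition lies below uncountably many of the copies $(f_{\xi,Q}(q_1), f_{\xi,P}(p_1))$, because every $Q$-condition has countable domain while the sets $f_\xi[d_{q_1}\setminus d_q]$ are nonempty and pairwise disjoint. Already for countably many copies, the $P_\kappa$-parts have finite support and in general cannot be combined. So there is no single ``common extension forcing that all $f_n(\rho)$ are equivalent to $\tau$''. Pairwise compatibility of the copies does not put uncountably many of them into the generic filter. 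The c.c.c.\ of $P_\kappa$ is no help on the $Q$ side, which is not c.c.c. What is needed is a density argument, and this is exactly the step your proposal leaves unjustified.

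The missing idea, which is how the paper argues, is to run the copying against an arbitrary countable set of coordinates; it also makes your independence computation unnecessary. The class of $\tau$ spans a countable-dimensional space, so in $V[K,G_\kappa]$ it is contained in $V[G_a]$ for some countable $a\subseteq\kappa$ (take a countable spanning set, each member of which lies in some $V[G_{a_n}]$). By countable closure of $Q$ and the c.c.c.\ of $P_\kappa$, we may take $a\in V$, with this forced by some $(r,s)\leq(q_1,p_1)$. Now choose a single injection $f$ fixing $d_q$ and mapping the remaining coordinates of $(q_1,p_1)$ outside $a\cup d_r\cup\supp(s)$. By Lemma \ref{ohsix}, with $q$ as the common weaker $Q$-condition, $(r,s)$ is compatible with $(f_Q(q_1),f_P(p_1))$. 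The latter forces that the realization of $f(\rho)$ is $\utilde{\dot{F}}$-equivalent to $\tau$ and not in $V[G_{d_q}]$. Since $f[d_{q_1}]\cap a\subseteq d_q$, mutual genericity shows it is not in $V[G_a]$, which is a contradiction.

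The same density argument would also repair your route: it shows that the set of $\xi<\omega_1$ for which the realization of $f_\xi(\rho)$ lies in the class of $\tau$ is unbounded in $\omega_1$, and then your independence argument finishes. Your treatment of the second part (the finite name $\sigma$) matches the paper's.
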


\begin{proof} This follows from Lemma \ref{ohsix} and Remark \ref{autorem2}. Suppose that the lemma failed for some $(q,p)$ and $\tau$. Then there exist a condition $(q',p') \leq (p,q)$ and $P_{d_{q'}}$-name $\sigma$ for a real number which is forced by $(q',p')$ to be $\utilde{\dot{F}}$-equivalent to $\tau$ and not a member of the $P_{d_{q}}$-extension of $V$. Given any $(q'',p'') \leq (q',p')$ and any $\alpha < \omega_{1}$, letting $f \colon d_{q'} \to \omega_{1}$ be an injection which is the identity function on $d_{q}$ and which maps the rest of $d_{q'}$ to elements of $\omega_{1} \setminus \alpha \cup d_{q''}$, we get by genericity that there is no countable $d \subseteq \kappa$ such that the $\utilde{\dot{F}}$-equivalence class of $\tau$ will be contained in the $P_{d}$-extension. 

The second part of the lemma is proved in the same way as the second part of the proof of Lemma \ref{locallem1}. 
\end{proof}  




We can then let, for each $q \leq q_{0}$, $\dot{F}^{q}$ be the $P_{d_{q}}$-name for the value of $\utilde{\dot{F}}_{K, G_{\kappa}} \restrict \bbR^{V[G_{d_{q}}]}$, for any $Q\times P_{\kappa}$-generic filter $(K, G_{\kappa})$ containing $(q_{0}, p_{0})$. Then $p$ forces in $P_{d_{q}}$ that the realization of $\dot{F}^{q}$ will be the equivalence relation corresponding to a decomposition of $\bbR$ into countable-dimensional subspaces in the $P_{d_{q}}$-extension. 

We have the following version of Lemma \ref{rephraselem}, which has essentially the same proof. 

\begin{lemma}\label{rephraselem} If $q,q' \leq q_{1}$, 
and $G_{d_{q}}$
and $G_{d_{q'}}$ are (respectively), $V$-generic filters for 
$P_{d_{q}}$ and 
$P_{d_{q'}}$ with $p_{1} \in G_{d_{q}} \cap G_{d_{q'}}$
such that \[\bbR^{V[G_{d_{q}}]} = \bbR^{V[G_{d_{q'}}]}\] and 
$\dot{\cW}_{q, G_{d_{q}}} = \dot{\cW}_{q', G_{d_{q'}}}$, then $\dot{F}^{q}_{G_{d_{q}}} = \dot{F}^{q'}_{G_{d_{q'}}}$. 
\end{lemma}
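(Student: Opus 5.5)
The plan is to follow the proof of the first version of Lemma \ref{rephraselem} essentially verbatim, replacing homomorphisms by linearly independent families and $\utilde{\dot{H}}$ by $\utilde{\dot{E}}$. Presumably $q_{1}$ and $p_{1}$ here play the roles of $q_{0}$ and $p_{0}$: $(q_1,p_1) \leq (q_0,p_0)$ normal, so that the names $\dot{F}^{q}$ are defined. I also read the hypothesis $\bbR^{V[G_{d_{q}}]} = \bbR^{V[G_{d_{q'}}]}$ as saying that the two extensions coincide, which holds since both are generated by reals. Without loss of generality I assume $d_{q} = d_{q'} = d$ as in the earlier lemma. If $d_q \neq d_{q'}$, apply Remark \ref{autorem2} to a bijection moving $d_{q'}$ onto $d_q$ fixing $d_{q_1}$; this changes neither $W$ nor the realizations, provided $d_{q}$ and $d_{q'}$ have the same order type modulo $d_{q_1}$. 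Otherwise pass through a common countable superset.

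First, fix $G^{*}$ that is $V[G_{d}]$-generic for $P_{\kappa\setminus d}$. Form $G_{1} = G_{d_q} \times G^{*}$ and $G_{2} = G_{d_{q'}} \times G^{*}$. Both are $V$-generic for $P_\kappa$, and $V[G_{1}] = V[G_{2}]$, so also $V[G_{1}\restrict a] = V[G_{2}\restrict a]$ for every $a \supseteq d$. Second, in an extension of $V[G_1]$ where $\cP(Q)^V$ is countable, I build $V[G_{1}]$-generic filters $K_{1} \ni q$ and $K_{2} \ni q'$ for $Q^{V}$ by a back-and-forth along good pairs. A pair $(r_{1}, r_{2})$ is \emph{good} if $r_{1} \leq q$, $r_{2} \leq q'$, $d_{r_{1}} = d_{r_{2}}$, and $\dot{\cW}_{r_{1}, G_{1}\restrict d_{r_1}} = \dot{\cW}_{r_{2}, G_{2}\restrict d_{r_2}}$.

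The extension step goes as follows. Given a good pair $(r_1,r_2)$ and $r_1' \leq r_1$ with $d' = d_{r_1'}$, the two models $V[G_1\restrict d']$ and $V[G_2\restrict d']$ are equal. So there is a $P_{d'}$-name $\dot{\cW}$ with $\dot{\cW}_{G_2\restrict d'} = \dot{\cW}_{r_1', G_1\restrict d'}$. This realization extends $\dot{\cW}_{r_2,G_2\restrict d_{r_2}}$ and is linearly independent with span all the reals of $V[G_2\restrict d']$. So I modify $\dot{\cW}$ off some condition to get a name forced by $1_{P_{d'}}$ to have these properties, for instance by defaulting to an arbitrary choice-given extension. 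Then $r_2' = (d',\dot{\cW})$ is a condition below $r_2$ making the pair good. The symmetric step is the same, and meeting the countably many dense sets of $Q^V$ alternately yields $K_1$ and $K_2$.

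Finally, goodness of all pairs gives $\utilde{\dot{\cW}}_{K_1,G_1} = \utilde{\dot{\cW}}_{K_2,G_2}$, hence $\utilde{\dot{E}}_{K_1,G_1} = \utilde{\dot{E}}_{K_2,G_2}$. The two $\HOD$ models then coincide: they have the same reals, the same $V$-parameters, the same $E$, and the same ambient model, since $V[K_i,G_i]$ share $\bbR$ and $V[G_1]=V[G_2]$ (the $\HOD$ is computed in $V[G_1][K_1]$ versus $V[G_2][K_2]$). The main obstacle is justifying that $\utilde{\dot{F}}$ evaluated in both is the same set. The defining formula refers to $V_\lambda[K,G_\kappa]$, and the two extensions $V[G_1][K_1]$ and $V[G_2][K_2]$ might differ. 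I would handle this, as the paper implicitly does, by noting that $K_1,K_2$ add no reals and that genericity over $V$ suffices to apply the forcing relation. Then $\dot{F}^{q}_{G_{d_q}}$ is the restriction of $\utilde{\dot{F}}_{K_1,G_1}$ to the reals of $V[G_{d_q}]$, and this equals the restriction of $\utilde{\dot{F}}_{K_2,G_2}$ to the same reals, which is $\dot{F}^{q'}_{G_{d_{q'}}}$. For this I use the preceding lemma, which says that $\dot F^q$ is determined by the $d_q$-part.
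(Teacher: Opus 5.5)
Your proposal is correct and is essentially the paper's argument. For this version the paper just reuses the proof of the corresponding lemma for the first model: a common tail generic $G^{*}$ for $P_{\kappa\setminus d}$, good pairs with equal domains and equal realized families, and a back-and-forth in an extension collapsing $\cP(Q)^{V}$ that produces $K_{1}\ni q$ and $K_{2}\ni q'$ realizing $\utilde{\dot{\cW}}$, hence $\utilde{\dot{E}}$, identically. Your preliminary reduction to $d_{q}=d_{q'}$ is extra, since only equal domains occur in the application. Your last step, moving the defining formula of $\utilde{\dot{F}}$ from $V[G_{1}][K_{1}]$ to $V[G_{2}][K_{2}]$, is not really justified by ``$K_{i}$ adds no reals'', and these two models need not coincide because $K_{1}\neq K_{2}$. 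The paper passes over this point in the same way, simply asserting that it suffices for the induced models $W$ to agree.
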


We now derive a contradiction, as before. 
Fix two ordinals, $\alpha_{1} < \alpha_{2}$ in $\kappa \setminus d_{q_{0}}$ and let 
$d = d_{q_{0}} \cup \{\alpha_{1}, \alpha_{2}\}$. 
We will find 
generic filters $G_{d}$ and $G'_{d}$ for $P_{d}$, such that
\begin{enumerate}
\item $p_{0}\in G_{d} \restrict P_{d_{q_{0}}} = G'_{d} \restrict P_{d_{q_{0}}}$,
\item $G_{d} \restrict P_{\{\alpha_{1}\}} = G'_{d} \restrict P_{\{\alpha_{1}\}}$ and 
\item $V[G_{d}] = V[G'_{d}]$,
\end{enumerate}
and a maximal linearly independent family of subspaces $\cW \in V[G_{d}]$ of $\bbR^{V[G_{d}]}$ into subspaces, extending $\dot{\cW}_{q_{0},G_{d} \restrict P_{q_{d_{0}}}}$, whose restriction to each of $V[G_{d_{q_{0}}\cup \{\alpha_{1}\}}]$, $V[G_{d_{q_{0}}\cup \{\alpha_{2}\}}]$ and $V[G'_{d_{q_{0}}\cup \{\alpha_{2}\}}]$ is a maximal linearly independent family in that model. We will have furthermore that $\dot{F}^{q}_{G_{d_{q}}} \neq \dot{F}^{q'}_{G_{d_{q'}}}$.
This will finish the proof.



Let $G_{d_{q_{0}}}$ be a $V$-generic filter for $P_{d_{q_{0}}}$ containing $p_{0}$. 
Let $x_{1}$ and $x_{2}$ be mutually $P$-generic reals over $V[G_{d_{q_{0}}}]$, and let $x_{3} = x_{1}-x_{2}$. Let $G_{d}$ be the $P_{d}$-generic filter extending $G_{d_{q_{0}}}$ giving rise to $x_{1}$ in coordinate $\alpha_{1}$ and $x_{2}$ in coordinate $\alpha_{2}$.  Let $G'_{d}$ be the $P_{d}$-generic filter extending $G_{d_{q_{0}}}$ giving rise to $x_{1}$ in coordinate $\alpha_{1}$ and $x_{3}$ in coordinate $\alpha_{2}$. Then $G_{d}$ and $G'_{d}$ satisfy conditions (1)-(3) above.

For each $i \in \{1,2,3\}$ let 
$W_{i}$ be, in $V[G_{d_{q_{0}}}][x_{i}]$, a subspace of $\bbR^{V[G_{d_{q_{0}}}][x_{i}]}$ such that
\begin{itemize}
\item $x_{i} \in W_{i}$, 
\item $W_{i} = \{ ax : a \in \bbR^{V[G_{d_{q_{0}}}]},\, x \in W_{i}\}$, 
\item $W_{i} \cap \bbR^{V[G_{d_{q_{0}}}]} = \{0\}$ and 
\item $\bbR^{V[G_{d_{q_{0}}}][x_{i}]}$ is the internal direct sum of $\dot{\cW}_{q_{0},G_{d} \restrict P_{q_{d_{0}}}} \cup \{ W_{i}\}$. 
\end{itemize} 
By Lemma \ref{keylemma2}, letting $W$ be the span of $\bigcup_{i=1}^{3}W_{i}$, $\dot{\cW}_{q_{0},G_{d} \restrict P_{q_{d_{0}}}} \cup \{W\}$ extends $\dot{\cW}_{q_{0},G_{d} \restrict P_{q_{d_{0}}}} \cup \{W_{i}\}$ for each $i$ in $\{1,2,3\}$, as desired.  


If $\dot{F}^{q}_{G_{d_{q}}} = \dot{F}^{q'}_{G_{d_{q'}}}$, then this set induces a set $\cU$ such that
\begin{itemize}
\item  in $V[G_{d_{q}}]$, $\cU$ is a maximal linearly independent family, 
\item the intersection of $\cU$ with each of the models $V[G_{d_{q_{0}}\cup \{\alpha_{1}\}}]$, $V[G_{d_{q_{0}}\cup \{\alpha_{2}\}}]$ and $V[G'_{d_{q_{0}}\cup \{\alpha_{2}\}}]$ is a maximal linearly independent family in this model and
\item for each $U \in \cU$ and each of the models $V[G_{d_{q_{0}}\cup \{\alpha_{1}\}}]$, $V[G_{d_{q_{0}}\cup \{\alpha_{2}\}}]$ and $V[G'_{d_{q_{0}}\cup \{\alpha_{2}\}}]$ if $U$ has a nonzero member in the model then it is contained in the model.
\end{itemize} 
If a $\cU$ as above did exist, then each of $x_{1}$, $x_{2}$ and $x_{3}$ would be a sum of elements of distinct spaces from $\cU$ in a unique way.  The equation $x_{3} + x_{2} = x_{1}$ however then gives two different sums for $x_{1}$. 

\section{Questions}\label{quessec}
 
The results above induce the following questions (and many other natural variations) which we do not know the answers to. 



We write $\Phi_{n}(\cS)$ for the assertion that a vector space $\cS$ is an internal direct sum of $n$-dimensional subspaces, and $\Phi_{\rmf}(S)$ for the assertion that $\cS$ is the internal direct sum of finite-dimensional subspaces.

\begin{remark}\label{1dimrem} Strengthening the first part of Remark \ref{conrem} above, we note that the existence of a Hamel basis for $\bbR$ is equivalent to $\bbR$ being an internal direct sum of one-dimensional subspaces (over $\bbQ$). The forward direction of this is the first part of the remark. For the reverse direction, the existence of such an internal direct sum gives a discontinuous homomorphism from $\bbR$ to itself, as noted above, and therefore an $\bbE_{0}$-selector (by \cite{LZ19}). Since the nonzero rational numbers form an abelian group under multiplication, this gives a selector for associated equivalence relation (i.e., a choice of a nonzero vector from each space), by \cite{GaoJackson}, which is then a Hamel basis.
\end{remark}


\begin{question} For which positive integers $n$ and $m$ does $\Phi_{n}(\bbR)$ imply $\Phi_{m}(\bbR)$? 
\end{question}

\begin{question} Does $\Phi_{\rmc}(\bbR)$ imply $\Phi_{\rmf}(\bbR)$ or does $\Phi_{\rmf}(\bbR)$ imply $\Phi_{n}(\bbR)$ for some or any positive integer $n$? 
\end{question}


\begin{question} Does the existence of a nontrivial homomorphism from $\bbR$ to itself imply that $\bbR$ is equal to an internal direct sum consisting of at least two subspaces? 
\end{question}

\noindent Department of Mathematics, Miami University, Oxford, OH 45056 USA. Email: \url{larsonpb@miamioh.edu}

\bls

\noindent Einstein Institute of Mathematics, The Hebrew University of
Jerusalem, Jerusalem, 91904, Israel, and Department of Mathematics, Rutgers University, New Brunswick, NJ 08854, USA.
Email: \url{shelah@math.huji.ac.il}
  

\begin{thebibliography}{99}

\bibitem{BJ95}
T. Bartoszy\'nski,  H.~I. Judah, {\bf Set theory}, A K Peters, Wellesley, MA, 1995






\bibitem{GaoJackson}
S. Gao, S. Jackson, \emph{Countable abelian group actions and hyperfinite equivalence relations}, Invent. Math. (2015)

\bibitem{H}
C. Henderson, \emph{A short proof that additive, measurable functions are linear}, \url{https://drive.google.com/file/d/1rkle78oIHH7Ypoww5N9heDd2BRh00Oda/view}

\bibitem{HS21}
H. Horowitz, S. Shelah, \emph{Transcendence bases, well-orderings of the reals and the axiom of choice}, Proc. Amer. Math. Soc. {\bf 149} (2021), no.~2, 851--858 

\bibitem{Kechris}
A.S. Kechris, {\bf Classical Descriptive Set Theory}, Springer-Verlag 1995 

\bibitem{LZ19}
P.B. Larson, J. Zapletal, \emph{Discontinuous homomorphisms, selectors and automorphisms of the complex field}, Proc. Amer. Math. Soc. 147 (2019) 4, 1733-1737

\bibitem{GST}
P.~B. Larson and J. Zapletal, {\bf Geometric set theory}, Mathematical Surveys and Monographs, 248, Amer. Math. Soc., Providence, RI, 2020

\bibitem{Pettis}
B.~J. Pettis, \emph{On continuity and openness of homomorphisms in topological groups}, Ann. of Math. (2) {\bf 52} (1950), 293--308

\bibitem{S85} 
S. Shelah, \emph{On measure and category}, Israel J. Math. {\bf 52} (1985), no.~1-2, 110--114


\end{thebibliography}
\end{document}